\titleformat{\subsection}[runin]
{\bfseries} {\thesubsection{.}}{0.15cm}{}[.]
\titleformat{\subsubsection}[runin]
{\em}{\thesubsubsection{.}}{0.15cm}{}[.]
\newtheorem{theorem}{Theorem}[section]
\newtheorem{lemma}[theorem]{Lemma}
\newtheorem{corollary}[theorem]{Corollary}
\theoremstyle{definition}
\newtheorem{definition}[theorem]{Definition}
\newtheorem{remark}[theorem]{Remark}
\newtheorem{problem}[theorem]{Problem}
\numberwithin{equation}{section}
\numberwithin{figure}{section}
\newcommand\Ascr{\mathscr{A}}
\newcommand\Cscr{\mathscr{C}}
\newcommand\Hscr{\mathscr{H}}
\newcommand\B{\mathbb{B}}
\newcommand\C{\mathbb{C}}
\newcommand\D{\overline{\mathbb D}}
\newcommand\CP{\mathbb{CP}}
\renewcommand\D{\mathbb D}
\newcommand\N{\mathbb{N}}
\newcommand\R{\mathbb{R}}
\newcommand\T{\mathbb{T}}
\newcommand\Z{\mathbb{Z}}
\newcommand\cd{\overline{\mathbb D}}
\renewcommand\d{\mathbb D}
\newcommand\igot{\mathfrak{i}}
\renewcommand\igot{\mathfrak{i}}
\renewcommand\imath{\igot}
\newcommand\hra{\hookrightarrow}
\newcommand\di{\partial}
\newcommand\dist{\mathrm{dist}}
\newcommand\Id{\mathrm{Id}}
\def\dist{\mathrm{dist}}
\newcommand\Area{\mathrm{Area}}
\begin{document}

\fancyhead[LO]{A properly embedded holomorphic disc in the ball with finite area and dense boundary}
\fancyhead[RE]{F.\ Forstneri\v c} 
\fancyhead[RO,LE]{\thepage}

\thispagestyle{empty}

\vspace*{1cm}
\begin{center}
{\bf\LARGE  A properly embedded holomorphic disc in the ball with finite area and dense boundary curve}  

\vspace*{0.5cm}

{\large\bf  Franc Forstneri{\v c}} 
\end{center}


\vspace*{1cm}

\begin{quote}
{\small
\noindent {\bf Abstract}\hspace*{0.1cm}
In this paper we construct a properly embedded holomorphic disc in the unit ball
$\B^2$ of $\C^2$ having a surprising combination of properties: on the one hand, 
it has finite area and hence is the zero set of a bounded holomorphic function on $\B^2$; 
on the other hand, its boundary curve is everywhere dense in the sphere $b\B^2$.
A similar result is proved in higher dimensions.
Our construction is based on an approximation result in contact geometry, also proved in the paper.

\vspace*{0.2cm}

\noindent{\bf Keywords}\hspace*{0.1cm} holomorphic disc, conformal map, Legendrian curve

\vspace*{0.1cm}

\noindent{\bf MSC (2010):}\hspace*{0.1cm} 32H02; 37J55, 53D10}

\vspace*{0.1cm}
\noindent{\bf Date: \rm April 4, 2018}
\end{quote}

\vspace{0.2cm}


\section{Introduction} 
\label{sec:intro}

In this paper we prove the following result which answers a question posed 
by Filippo Bracci in a private communication. Let $\D=\{z\in\C:|z|<1\}$ be the open unit disc, and let
$\B^n=\{(z_1,\ldots,z_n)\in \C^n : |z|^2 = \sum_{j=1}^n |z_j|^2<1\}$ be the open unit ball in the
complex Euclidean space $\C^n$ for any $n\in \N=\{1,2,3,\ldots\}$.

\begin{theorem}\label{th:main}
For every  $n>1$ and $\epsilon>0$ there exists a proper holomorphic embedding $F\colon \D\hra \B^n$ 
which extends to an injective holomorphic immersion $F\colon \overline \D\setminus\{\pm 1\}\to\overline \B^n$
such that $\Area(F(\D))<\epsilon$ and the boundary $F(b\D\setminus \{\pm1\})$ is everywhere dense in $b\B^n$.
\end{theorem}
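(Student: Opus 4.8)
The plan is to realize $F$ as a limit of proper holomorphic embeddings $F_m\colon\overline{\D}\to\overline{\B^n}$, modified on shrinking arcs near $\pm1$, with the boundary curves becoming dense while staying close to Legendrian so that the area stays small. Write $\langle z,w\rangle=\sum_{j=1}^{n}z_{j}\overline{w_{j}}$. For holomorphic $F\colon\D\to\C^{n}$ the area of $F(\D)$, counted with multiplicity, is $\int_{\D}|F'|^{2}\,dA=\int_{\D}F^{*}\omega$ with $\omega=\tfrac{i}{2}\sum_{j}dz_{j}\wedge d\overline{z_{j}}=d\big({-}\tfrac{i}{2}\langle dF,F\rangle\big)$, so Stokes' theorem gives, as soon as $F$ is continuous and piecewise $C^1$ up to $b\D$ with $|F|\equiv1$ there,
\[
\Area\big(F(\D)\big)=-\tfrac12\int_{b\D}\operatorname{Im}\langle F,dF\rangle ,
\]
the point being that $d|F|^{2}=2\operatorname{Re}\langle F,dF\rangle=0$ on $b\D$ makes $\langle F,dF\rangle$ purely imaginary there. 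The boundary curve $\gamma=F|_{b\D}$ is \emph{Legendrian} for the standard contact structure $\xi_{p}=\{v\in\C^{n}:\langle v,p\rangle=0\}$ on $b\B^{n}$ exactly when $\langle\gamma,d\gamma\rangle\equiv0$, so a disc whose boundary curve is $C^{1}$-close to a Legendrian curve has small area. Finally a bounded holomorphic $F$ with $|F|\le1$ and $|F|\equiv1$ on $b\D\setminus\{\pm1\}$ automatically maps $\D$ into $\B^{n}$, since $|F|^{2}$ is subharmonic. Thus it suffices to produce $F$ holomorphic on $\D$, real-analytic, injective and immersive on $\overline{\D}\setminus\{\pm1\}$ with $|F|\equiv1$ there, with $F(b\D\setminus\{\pm1\})$ dense in $b\B^{n}$, with $\int_{b\D}|\!\operatorname{Im}\langle F,dF\rangle|<2\epsilon$, and --- the one genuinely nonautomatic requirement --- with $|F(\zeta)|\to1$ as $\zeta\to b\D$, so that $F$ is proper.

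For the construction, fix a sequence $\{p_{m}\}_{m\ge1}$ dense in $b\B^{n}$ and numbers $\epsilon_{m}>0$ with $\sum_{m}\epsilon_{m}<\epsilon$, and radii $\rho_{m}\downarrow0$ chosen in advance. Start with the embedded proper disc $F_{0}(\zeta)=\big(g(\zeta),\,\delta(1-\zeta),\,0,\dots,0\big)$, where $\delta>0$ is small and $g$ is the nonvanishing outer function with $|g|^{2}=1-\delta^{2}|1-\zeta|^{2}$ on $b\D$; then $\Area(F_{0}(\D))=O(\delta^{2})<\epsilon_{0}$. Given the proper holomorphic embedding $F_{m-1}\colon\overline{\D}\to\overline{\B^n}$ (continuous up to $b\D$, with $|F_{m-1}|\equiv1$ there --- no puncture yet), take a short subarc $A$ of $b\D$ inside $D(1,\rho_{m})$ when $m$ is odd, resp. $D(-1,\rho_{m})$ when $m$ is even, placed in a controlled annulus so as not to disturb the detours made at earlier steps; on $A$ the boundary curve $\gamma=F_{m-1}|_{b\D}$ is a tiny embedded arc on $b\B^{n}$. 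Using the contact-geometric approximation result proved later in the paper, replace $\gamma|_{A}$ by an embedded arc $\widetilde\gamma\subset b\B^{n}$ with the same $1$-jets at $\partial A$, disjoint from $\gamma(b\D\setminus A)$ and from $F_{m-1}(\D)$ (a curve generically misses a curve in $b\B^n$ and a surface in $\B^n$ since $n>1$), which is $C^{1}$-close to a Legendrian arc and makes a detour through $D(p_{m},\epsilon_{m})$. Then a Riemann--Hilbert / gluing step for bordered holomorphic discs in the ball (perturbing the boundary of $F_{m-1}$ along the single arc $A$) yields $F_{m}$, holomorphic and real-analytic up to $b\D$, with $|F_{m}|\equiv1$ on $b\D$, equal to $F_{m-1}$ off $D(\pm1,\rho_{m})$, with boundary curve $C^{1}$-close to the modified curve --- hence passing within $\epsilon_m$ of $p_m$ and satisfying $\Area(F_{m}(\D))<\Area(F_{m-1}(\D))+\epsilon_{m}$ by the area formula --- and, after a final generic perturbation within holomorphic maps (removing the isolated double points, possible since $n>1$), an embedding. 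Set $F=\lim_{m\to\infty}F_{m}$.

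The soft verifications then go as follows. By construction the sequence is eventually constant near every point of $b\D\setminus\{\pm1\}$, hence converges in the real-analytic topology on compacta of $\overline{\D}\setminus\{\pm1\}$, and by Montel it converges locally uniformly on $\D$; so $F$ is holomorphic on $\D$, real-analytic, injective and immersive on $\overline{\D}\setminus\{\pm1\}$ with $|F|\equiv1$ there, and injective on $\D$ (the modifications are supported away from the part of the disc already pinned down, and the generic perturbations are transverse in a target of real dimension $2n>2$). Density of $\{p_{m}\}$ and the approximations in step $m$ make $F(b\D\setminus\{\pm1\})$ dense in $b\B^{n}$. By lower semicontinuity of area under locally uniform convergence, $\Area(F(\D))\le\Area(F_{0}(\D))+\sum_{m}\epsilon_{m}<\epsilon$; equivalently the boundary formula passes to the limit on $\D$ minus shrinking disks about $\pm1$, the circle integrals there vanishing since the areas, hence $\|F'\|_{L^{2}}$ near $\pm1$, stay uniformly bounded.

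The main obstacle is \emph{properness at $\pm1$}: each $F_{m}$ is proper, but the limit need not be unless $|F(\zeta)|\to1$ as $\zeta\to\pm1$, i.e. unless the positive superharmonic function $s=1-|F|^{2}$, which vanishes continuously on $b\D\setminus\{\pm1\}$, carries no positive harmonic component concentrated at $\pm1$ (a multiple of a Poisson kernel there). Since $0\le s\le1$, the boundary measure in the Riesz decomposition of $s$ is absolutely continuous, hence identically zero because it is supported in the null set $\{\pm1\}$; therefore $s$ is the Green potential of the finite measure $\tfrac{2}{\pi}|F'|^{2}\,dA$ of mass $\tfrac1\pi\Area(F(\D))<\epsilon/\pi$. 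It remains to see that this Green potential tends to $0$ at $\pm1$, which does \emph{not} follow from finiteness of its mass; this is where the parameters must be chosen carefully --- shrinking the radii $\rho_{m}$ and the area and sup-norm increments of $1-|F_{m}|^{2}$ inside $D(\pm1,\rho)$ fast enough as functions of $\rho$ --- so that $s(\zeta)\to0$ as $\zeta\to\pm1$. Reconciling a dense (hence infinitely wound) boundary curve with finite area and genuine properness is the crux: the near-Legendrian routing controls the area, the contact approximation lemma supplies the rerouting past the $p_m$, and the harmonic-function bookkeeping above controls properness. Granting this, $F\colon\D\to\B^{n}$ is the desired proper holomorphic embedding extending to the injective holomorphic immersion $\overline{\D}\setminus\{\pm1\}\to\overline{\B^{n}}$, and the same construction performed directly in $\B^{n}$ gives the statement for every $n>1$.
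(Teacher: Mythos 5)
Your plan is a plausible-looking reroute of the boundary along a dense sequence, but it collapses at precisely the point that the paper singles out as the crux of the problem: removing double points in the lowest-dimensional case $n=2$. You write that a ``final generic perturbation within holomorphic maps (removing the isolated double points, possible since $n>1$)'' yields an embedding. This is false for $n=2$. A holomorphic map $F\colon\D\to\C^2$ has double-point locus of expected real dimension $4-2n=0$ in $(\D\times\D\setminus\Delta)/\Z_2$: for $n=2$ double points are isolated and, being transverse intersections of two local complex discs in a complex surface, are \emph{stable} under small perturbations, not removable by them. Generic perturbation removes double points only when $n\ge 3$. Since the theorem is most interesting at $n=2$ (that is where Berndtsson's theorem applies), an argument that only works for $n\ge 3$ does not prove the statement. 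The paper handles this by never letting double points appear in the first place: one starts from a global real-analytic Legendrian embedding $f\colon\R\hra b\B^n$ with dense image, complexifies $f$ to an embedded thin holomorphic strip $f(S)\subset\C^n$ of arbitrarily small area tangent to $b\B^n$ along $f(\R)$, and then pulls the strip into the ball by multipliers $e^{-h}$, $h\in\Hscr^+_*$, with a dedicated general-position lemma (Lemma~\ref{lem:position}) arranging at every stage that the new piece of boundary meets the previously-constructed disc only at the two marked points, so injectivity is preserved by construction throughout the induction.

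Two further steps of your argument are not established. First, the inductive ``Riemann--Hilbert / gluing step for bordered holomorphic discs in the ball'' — deforming $F_{m-1}$ to a new proper holomorphic map $F_m$ whose boundary closely tracks an arbitrary prescribed embedded near-Legendrian arc $\widetilde\gamma\subset b\B^n$, while holding $F_{m-1}$ fixed off a small arc and preserving injectivity — is exactly the nontrivial content that needs a proof, and no precise statement or citation is given; the paper instead avoids ever having to perform such a boundary correction by fixing the global Legendrian curve $f(\R)$ in advance and only adjusting the domain inside the pre-existing strip. Second, you yourself flag the properness verification as incomplete (``granting this''): you show that the Riesz measure on $b\D$ of $1-|F|^2$ vanishes, but you do not prove that the Green potential of $\frac{2}{\pi}|F'|^2\,dA$ tends to $0$ at $\pm1$, which is what properness demands; you only indicate that one would need to shrink the $\rho_m$ and the increments ``fast enough'', without a criterion. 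In the paper this issue does not arise in this form: properness is automatic because the limit map $F=e^{-h}f$ is constructed on a domain $\Omega$ on which it is by design a proper map into $\B^n$, with $|F|=1$ exactly on $b\Omega$ (inequalities \eqref{eq:h}, \eqref{eq:whatFdoes} and transversality to $b\B^n$).

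In short: the area-control identity $\Area(F(\D))=-\tfrac12\int_{b\D}\operatorname{Im}\langle F,dF\rangle$ and the density-by-detours heuristic are sound ideas, but the construction as written fails to produce an \emph{embedded} disc for $n=2$, leaves the key gluing step unproved, and acknowledges properness as unresolved. You would need, at minimum, a mechanism that prevents self-intersections at every stage of the induction (the role played by Lemma~\ref{lem:position} here) rather than attempting to excise them afterwards.
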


Our construction provides an injective holomorphic immersion $F$
of an open neighborhood $U\subset \C$ of $\cd\setminus \{\pm1\}$ into $\C^n$ which is transverse to 
the sphere $b\B^n$ and satisfies $F(U)\cap \B^n=F(\D)$. 
A minor modification of the proof, replacing Lemma \ref{lem:special} by Lemma \ref{lem:special2}, 
yields a map $F$ as above which extends holomorphically across $b\D$ except at one boundary point.
(Clearly it is impossible for $F$ to be smooth on all of $\cd$.)
The result seems especially interesting in dimension $n=2$ since
an embedded holomorphic disc of finite area in the ball $\B^2$ is the zero set of a bounded
holomorphic function on $\B^2$ according to Berndtsson \cite[Theorem 1.1]{Berndtsson1980},
so we have the following corollary to Theorem \ref{th:main}.

\begin{corollary}\label{cor:main}
There is a bounded holomorphic function on $\B^2$ whose zero set
is  a smooth complex curve of finite area, biholomorphic to the disc, with injectively immersed
boundary curve that is everywhere dense in the sphere $b\B^2$.
\end{corollary}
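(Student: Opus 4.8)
\noindent\emph{Proof proposal.}
The corollary is intended as an immediate consequence of Theorem~\ref{th:main} in the case $n=2$ combined with the theorem of Berndtsson quoted above, so the plan is simply to splice the two together. First I would invoke Theorem~\ref{th:main} with $n=2$ and some fixed $\epsilon>0$ — say $\epsilon=1$, the value being immaterial for this statement — to obtain a proper holomorphic embedding $F\colon\D\hra\B^2$ that extends to an injective holomorphic immersion $F\colon\overline{\D}\setminus\{\pm1\}\to\overline{\B^2}$ with $\Area(F(\D))<\epsilon$ and with $F(b\D\setminus\{\pm1\})$ everywhere dense in $b\B^2$. Put $M:=F(\D)$.

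Next I would check that $M$ has every feature demanded by the corollary except the zero-set property, and then extract that property from Berndtsson. Since $F$ is a proper holomorphic embedding of the disc, $M$ is a closed smooth complex curve of $\B^2$, biholomorphic to $\D$ via $F$; it has finite area by construction, and its boundary curve $F(b\D\setminus\{\pm1\})$ is an injectively immersed arc everywhere dense in the sphere $b\B^2$. Thus $M$ is precisely a smooth complex curve of finite area in $\B^2$, the situation to which \cite[Theorem 1.1]{Berndtsson1980} applies — finite area forces the Blaschke-type bound $\int_M(1-|z|^2)\,dA\le\Area(M)<\infty$ since $0\le1-|z|^2\le1$ on $\B^2$ — so there is a bounded holomorphic function $f$ on $\B^2$ whose zero divisor is $M$. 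As $M$ is a reduced, smooth divisor, the zero set $f^{-1}(0)$ equals $M$ as a subset of $\B^2$, and $f$ has all the asserted properties.

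The only substantive ingredient is Theorem~\ref{th:main} itself, whose proof occupies the rest of the paper; granting it together with Berndtsson's theorem, the corollary presents no real difficulty. The residual work is the bookkeeping indicated above — confirming that finite area is exactly the hypothesis Berndtsson's theorem requires, and that, $M$ being a reduced smooth divisor, the zero set of $f$ is $M$ itself rather than a larger analytic set, so that the dense injectively immersed curve produced by Theorem~\ref{th:main} genuinely is the boundary of the zero set of a bounded holomorphic function on $\B^2$.
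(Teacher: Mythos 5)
Your proposal matches the paper's own treatment: the corollary is stated as an immediate consequence of Theorem~\ref{th:main} (with $n=2$) and Berndtsson's theorem \cite[Theorem 1.1]{Berndtsson1980}, exactly the splice you carry out. The paper gives no further argument, so your bookkeeping about finite area giving the Blaschke condition and the zero set being exactly $M$ is the same reasoning, merely spelled out.
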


It would be interesting to know whether there 
exists a holomorphic fibration $\B^2\to\D$ by discs as 
in Theorem \ref{th:main}. As Bracci pointed out, nonexistence 
of such a fibration  would lead to an analytic proof of the theorem of
Koziarz and Mok \cite{KoziarzMok2010} that there is no fibration of the ball over the disc which is 
invariant under the action of a co-compact group of automorphisms of the ball.

In the literature there are only few known constructions of {\em properly embedded} holomorphic discs
 with interesting global properties in the $2$-ball, or in any $2$-dimensional manifold. 
A recent one, due to Alarc\'on, Globevnik and L{\'o}pez 
\cite{AlarconGlobevnikLopez2016Crelle}, gives a {\em complete} properly embedded 
holomorphic disc in $\B^2$; however, such discs necessarily have infinite area.
On the other hand, it was shown by Globevnik and Stout in 1986 \cite[Theorem VI.1]{GlobevnikStout1986}
that every strongly pseudoconvex domain $D\subset \C^n$ $(n\ge 2)$ with real
analytic boundary $bD$ contains a proper holomorphic disc $F\colon \D\to D$ (not necessarily embedded)
of arbitrarily small area such that $\overline{F(\D)}= F(\D)\cup\overline  \omega$, where $\omega$ is a given 
nonempty connected subset of $bD$. (It is easy to achieve the latter improvement also in our result.)
The main new point here is that we find properly {\em embedded} holomorphic discs with these properties,
even in the lowest dimensional case $n=2$.
The principal difficulty is that double points of a complex curve in 
a complex surface are stable under deformations, and there are no known constructive methods of removing them. 
One of our main new tools is a deformation procedure, based on transversality and the use of certain 
special harmonic functions, to ensure that double points never appear 
during the construction; see Lemma \ref{lem:position}. This makes our construction
considerably more subtle than the one in \cite{GlobevnikStout1986}.
We also use a more precise version of the exposing of boundary points  
\cite[Lemma 2.1]{ForstnericWold2009}; see Lemma \ref{lem:special}. The mentioned
result in \cite{ForstnericWold2009} has been extended to strongly pseudoconvex domains in higher dimension 
(see Diederich, Forn{\ae}ss and Wold \cite{DiederichFornaessWold2014}) and was used in the
study of the squeezing function and the boundary behaviour of intrinsic metrics;
see Zhang \cite{Zhang2017} and the references therein. 

The construction begins with a new result in contact geometry.  
The sphere $b\B^{n}=S^{2n-1}$ for $n>1$ carries the contact structure $\xi$ 
given by the distribution of complex tangent hyperplanes.
The complement of a point in $b\B^{n}$ is contactomorphic  to the Euclidean space $\R^{2n-1}_{(x,y,z)}$
with its standard contact structure $\xi_0=\ker(dz+xdy)$, where $x,y\in\R^{n-1}$, $z\in\R$  
and $xdy=\sum_{j=1}^{n} x_jdy_j$. 
A smooth curve $f\colon \R\to b\B^{n}$ is said to be {\em complex tangential}, or $\xi$-{\em Legendrian}, if
$\dot f(t)\in \xi_{f(t)}$ for every $t\in\R$. The following result is proved in Sect.\ \ref{sec:dense}. 

%
%
\begin{theorem}\label{th:approximation}
Let $n>1$. Every continuous map $f_0\colon \R\to\R^{2n-1}$ can be approximated in 
the fine $\Cscr^0$ topology by real analytic injective $\xi_0$-Legendrian  immersions $f\colon \R\hra \R^{2n-1}$. 
\end{theorem}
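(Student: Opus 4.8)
The plan is to exploit the fact that a path $t\mapsto(x(t),y(t),z(t))$ in $\R^{2n-1}$, with $x,y\in\R^{n-1}$ and $z\in\R$, is $\xi_0$-Legendrian exactly when $z'(t)=-x(t)\cdot y'(t)$, i.e.\ $z(t)=z(0)-\int_0^t x\cdot dy$. Thus a Legendrian curve is determined by its Lagrangian projection $(x,y)\colon\R\to\R^{2n-2}$ and the single constant $z(0)$, and I would build the approximant by first producing a suitable pair $(x,y)$ and then \emph{defining} $z$ by this integral; then the Legendrian condition holds automatically and is never in danger when $(x,y)$ is later perturbed. Fix a continuous $\epsilon\colon\R\to(0,\infty)$, exhaust $\R$ by $I_k=[k,k+1]$ and fix $\epsilon_k\in(0,\min_{I_k}\epsilon)$; every estimate below is carried out on one $I_k$ with tolerance $\epsilon_k$, and the pieces are patched so that the resulting curve passes through $f_0(k)$ at $t=k$. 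The three steps are: (i) produce a smooth Legendrian immersion $\epsilon$-close to $f_0$; (ii) make it injective by general position; (iii) make it real analytic by Carleman approximation.

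\textbf{Step (i).} Write $f_0=(a_0,b_0,c_0)$ and split $I_k$ into subintervals $[t_i,t_{i+1}]$ so fine (using uniform continuity) that $f_0$ varies by $<\epsilon_k/4$ on each. On $[t_i,t_{i+1}]$ I join $f_0(t_i)$ to $f_0(t_{i+1})$ inside the $\epsilon_k/4$-ball about $f_0(t_i)$ as follows: first move $x$ from $a_0(t_i)$ to $a_0(t_{i+1})$ with $y$ essentially fixed (so $z'\approx0$ and $z$ barely moves); then move $y$ from $b_0(t_i)$ to $b_0(t_{i+1})$ along a path which, on top of the straight segment, runs through finitely many small loops during which $x$ executes a matching out-of-phase oscillation, chosen so that the net increment of $z=-\int x\cdot dy$ over this arc is the prescribed $c_0(t_{i+1})-c_0(t_i)$. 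Finitely many loops suffice because that increment is tiny while each loop contributes a signed area $\oint x\cdot dy$ of definite size, and ordering the loops keeps the excursion of $z$ within $O(\epsilon_k)$ of $c_0(t_i)$. Smoothing the piecewise-defined $(x,y)$ and putting $z(t)=z(0)-\int_0^t x\cdot dy$ yields a smooth $\xi_0$-Legendrian curve $g$ with $\|g-f_0\|_{\Cscr^0}<\epsilon$ in the fine topology. In the loop moves I can arrange that $x'$ and $y'$ never vanish simultaneously (and that $|x'|+|y'|$ is bounded below on each $I_k$), so $g'=(x',y',-x\cdot y')$ never vanishes and $g$ is an immersion.

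\textbf{Steps (ii)--(iii).} Consider the Lagrangian projection $(x,y)\colon\R\to\R^{2n-2}$. If $n\ge3$ then $2n-2\ge4>2$, so by a standard general-position argument (run over the exhaustion with summably small corrections) an arbitrarily fine perturbation of $(x,y)$ is an injective immersion; since $g(s)=g(t)$ forces $x(s)=x(t)$, $y(s)=y(t)$, the curve $g$ is then injective. If $n=2$ then $(x,y)\colon\R\to\R^2$, and after a fine perturbation its only self-intersections are transverse double points, isolated in $\{s<t\}$; at such a pair $z(s)-z(t)=-\oint x\,dy$ equals minus the signed area of the corresponding loop of $(x,y)$, which one further generic perturbation makes nonzero at every one of the countably many such pairs, so $g$ is again injective. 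Finally I approximate each component of $(x,y)$ in the fine $\Cscr^1$ topology by an entire function (Carleman's theorem), with tolerances decaying fast enough in $|t|$ that the re-integrated $z(t)=z(0)-\int_0^t x\cdot dy$ stays $\Cscr^0$-close; the resulting triple $f$ is a real analytic $\xi_0$-Legendrian curve, still an immersion ($|x'|+|y'|$ stays bounded below) and still injective (transversality of the double points and nonvanishing of the loop areas are open conditions). This $f$ is the desired approximant.

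\textbf{Main obstacle.} The $\Cscr^0$-density of Legendrian curves is not the hard part; it is precisely the enclosed-area, or period-domination, mechanism of Step (i). The subtle point is ensuring \emph{injectivity} while staying inside the Legendrian class: $z$ cannot be moved independently of $(x,y)$, and in the critical case $n=2$ the Lagrangian projection lies in the plane, where double points are unavoidable, so the injectivity of $g$ must be recovered from the $z$-coordinate via the nondegeneracy of the loop areas. The other persistent bookkeeping problem is the noncompactness of $\R$: the loop construction, the general-position perturbations, and the Carleman step all have to be organised over the exhaustion $\{I_k\}$ with summably decaying tolerances, so that the final map lies in the prescribed fine $\Cscr^0$-neighbourhood of $f_0$ and the immersion and injectivity already achieved survive every subsequent step.
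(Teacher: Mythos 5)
Your Steps (i) and (ii) match the paper's smooth approximation step closely: you use the identity $z'=-x\cdot y'$ so that a Legendrian curve is determined by its Lagrangian projection, insert small loops in the $(x,y)$-projection to create the prescribed $z$-increments via enclosed area, and for $n=2$ recover injectivity by making the signed areas of the double-point loops nonzero. (For $n\ge 3$ the generic projection is already injective, as you note; the paper writes the argument once for $\R^3$ and says it carries over.) The problem is Step (iii). You $\Cscr^1$-Carleman-approximate $(x,y)$ by an entire $(\tilde x,\tilde y)$ with tolerance $\delta(t)$ and then \emph{define} $\tilde z(t)=z(0)-\int_0^t\tilde x\cdot d\tilde y$. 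The resulting $z$-error is $\tilde z(t)-z(t)=-\int_0^t(\tilde x\cdot\tilde y'-x\cdot y')\,ds$, a \emph{cumulative} quantity that tends, as $t\to\pm\infty$, to a generically nonzero limit; it cannot be forced below a prescribed target $\epsilon(t)$ that tends to $0$ at infinity, no matter how fast $\delta(t)$ decays. Indeed $\sup_t\bigl|\int_0^t(\cdots)\bigr|\ge\bigl|\int_0^\infty(\cdots)\bigr|>0$ in general, while $\inf_t\epsilon(t)$ may be $0$. So a single, terminal Carleman step does not yield the fine $\Cscr^0$-approximation claimed in the theorem; you would need either Carleman with additional moment-type constraints forcing $\int_k^{k+1}(\tilde x\cdot\tilde y'-x\cdot y')\,ds=0$ for every $k$, or (as in the paper) an inductive construction. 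The same accumulation also threatens injectivity in the $n=2$ case for double-point pairs $(s,t)$ with $t-s$ large, since the loop area $\int_s^t x\cdot y'$ can be shifted by an amount that is not locally small.

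The paper's proof avoids this by building a sequence of polynomial Legendrian maps $f_j\colon\C\to\C^{2n+1}$, real on $\R$, by applying Mergelyan's theorem to the Lagrangian projection on the growing Runge compacts $E_j=j\cd\cup[-j-1,j+1]$, symmetrizing to stay real on $\R$, and defining $z_{j+1}(\zeta)=z_0(0)-\int_0^\zeta x_{j+1}\dot y_{j+1}$. At each stage the approximation is on a fixed compact set, so the $z$-error is bounded by the approximation precision times the diameter of $E_j$ and can be made as small as required for conditions (c$_j$)--(e$_j$); condition (e$_j$) then ensures the limit stays injective on every $[-j,j]$. This level-by-level control is exactly what the one-shot Carleman step lacks. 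If you want to salvage your route, you should replace the terminal Carleman step with a similar telescoping scheme, or at least cite a Carleman-with-interpolation/constraints theorem that lets you pin down the integrals $\int_k^{k+1}\tilde x\cdot d\tilde y$ along the way.
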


It is possible that Theorem \ref{th:approximation} holds in every real analytic 
contact manifold $(X,\xi)$. Approximation by immersed (not necessarily injective) 
real analytic Legendrian immersion $\R\hra (X,\xi)$ were obtained by 
Globevnik and Stout \cite[Theorem V.1]{GlobevnikStout1986}. 
Results on approximation of smoothly embedded compact isotropic submanifolds 
by real analytic ones can be found in the monograph by Cieliebak and Eliashberg \cite[Sect.\ 6.7]{CieliebakEliashberg2012};
however, the arguments given there do not seem to apply to noncompact isotropic submanifolds. 

Write $\R_+=\{t\in\R: t\ge 0\}$ and $\R_-=\{t\in\R: t\le 0\}$.
The following is an  immediate corollary to Theorem \ref{th:approximation}.

\begin{corollary}\label{cor:dense}
Let $n>1$. For every nonempty open connected subset $\omega$ of $b\B^{n}$ 
there exists a real analytic injective complex tangential immersion $f\colon \R\hra \omega$ 
such that the cluster set of each of the sets $f(\R_+)$ and $f(\R_-)$ 
equals $\overline\omega$. This holds in particular for $\omega=b\B^n$.
\end{corollary}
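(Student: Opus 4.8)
The plan is to deduce this from Theorem \ref{th:approximation} by transporting the problem to $\R^{2n-1}$ through a contactomorphism and then approximating a continuous model curve whose forward and backward tails are each dense. First I would pick a point $p\in b\B^n$: if $\omega\ne b\B^n$ I choose $p\notin\omega$ (possible since $\omega$ is open), and if $\omega=b\B^n$ I choose $p$ arbitrarily. Then $\omega':=\omega\setminus\{p\}$ is a nonempty, open, connected subset of $b\B^n\setminus\{p\}$ that is dense in $\omega$ (it is either $\omega$ itself, or $\omega$ with one point removed, which in the case $\omega=b\B^n$ stays connected since $\dim b\B^n=2n-1\ge 3$). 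Using the contactomorphism $\phi\colon\R^{2n-1}\to b\B^n\setminus\{p\}$ recalled just before Theorem \ref{th:approximation} — which may be taken real analytic — I set $\Omega:=\phi^{-1}(\omega')$, a nonempty, open, connected (hence path-connected) subset of $\R^{2n-1}$; then $\phi$ restricts to a homeomorphism of $\Omega$ onto $\omega'$, so it carries any subset of $\Omega$ that is dense in $\Omega$ to a subset of $\omega$ that is dense in $\omega$, hence with closure in $b\B^n$ equal to $\overline\omega$.

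Next I would fix a countable basis $\{B_k\}_{k\in\N}$ for the topology of $\Omega$ consisting of open balls $B_k=B(c_k,\rho_k)$ with $\overline{B_k}\subset\Omega$, and, using path-connectedness of $\Omega$, build a continuous map $f_0\colon\R\to\Omega$ that passes through each center $c_k$ along some sequence of parameters tending to $+\infty$ and along some sequence tending to $-\infty$: one orders the prescribed visits into a bi-infinite list and concatenates arcs in $\Omega$ joining consecutive targets, arranging that the centers are hit only at an isolated set of parameters. Then every tail $f_0([T,+\infty))$ and $f_0((-\infty,-T])$ is dense in $\Omega$.

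Now I would choose a continuous function $\epsilon\colon\R\to(0,\infty)$ with $\epsilon(t)<\dist\bigl(f_0(t),\R^{2n-1}\setminus\Omega\bigr)$ for every $t$ and with $\epsilon(t)<\rho_k$ at each parameter $t$ where $f_0(t)=c_k$, and invoke Theorem \ref{th:approximation} to get a real analytic injective $\xi_0$-Legendrian immersion $g\colon\R\hra\R^{2n-1}$ with $|g(t)-f_0(t)|<\epsilon(t)$ for all $t$. The first condition forces $g(\R)\subset\Omega$, and the second forces $g(t)\in B_k$ whenever $f_0(t)=c_k$, so every tail of $g(\R_+)$ and of $g(\R_-)$ is again dense in $\Omega$. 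Then $f:=\phi\circ g\colon\R\hra\omega$ is real analytic, injective, and complex tangential, because a contactomorphism carries $\xi_0$-Legendrian curves to $\xi$-Legendrian ones and on $b\B^n$ the $\xi$-Legendrian condition is exactly complex tangentiality; and for every $T>0$ the set $f([T,+\infty))=\phi\bigl(g([T,+\infty))\bigr)$ has closure $\overline\omega$ in $b\B^n$ by the first paragraph, so the cluster set $\bigcap_{T>0}\overline{f([T,+\infty))}$ of $f(\R_+)$ equals $\overline\omega$, and the same holds for $f(\R_-)$. This gives the assertion, the case $\omega=b\B^n$ included.

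The argument is soft once Theorem \ref{th:approximation} is available; the one step needing genuine attention — the nearest thing to an obstacle — is organizing the \emph{fine} $\Cscr^0$ approximation, i.e.\ selecting the position-dependent error bound $\epsilon(t)$, so that the real analytic approximant simultaneously stays inside the open set $\Omega$ and preserves the density of its two tails. Everything else is point-set topology together with the fact that contactomorphisms preserve the Legendrian condition.
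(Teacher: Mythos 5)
Your argument is correct, and it is essentially the one the paper has in mind: the paper simply calls the corollary ``an immediate corollary to Theorem \ref{th:approximation}'' and leaves the transport via the contactomorphism and the choice of the fine-topology error function implicit. You have spelled out precisely the step that actually requires care --- choosing $\epsilon(t)$ small enough to keep the approximant inside $\Omega$ \emph{and} small enough at the designated visit times $t_m$ so that the two tails of the Legendrian curve still meet every basic open ball --- and that step is handled correctly, so the proposal is a faithful (if more detailed) rendering of the intended proof.
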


The existence of a real analytic complex tangential injective immersion $f\colon \R\to b\B^n$
whose image $\Lambda=f(\R)\subset b\B^{n}$ is dense in $b\B^n$ 
is the starting point of our construction. A suitably chosen (thin) complexification of 
$\Lambda$ is an embedded complex disc $\Sigma_0\subset \C^{n}$
of arbitrarily small area such that $\Sigma_0\cap \overline\B^{n} =\Lambda$
(see Lemma \ref{lem:initial-embedding}). By pulling $\Sigma_0$ slightly inside the ball along $\Lambda$
by a suitably chosen holomorphic multiplier, where the amount of pulling diminishes sufficiently fast as we 
approach either of the two ends of $\Lambda$ so that the boundary of $\Sigma_0$ remains in 
the complement of the closed ball, we obtain a properly embedded holomorphic disc $\Sigma$ in $\B^n$ of 
arbitrarily small area whose boundary approximates $\Lambda$ as closely as desired in the fine 
$\Cscr^0$ topology, and hence they are dense in the sphere $b\B^n$.
Since $\Lambda$ is dense in $b\B^n$, it is a rather subtle task to obtain 
injectivity of the limit disc. The main difficulty is that injectivity is not an open condition among 
immersions of noncompact manifolds in any fine topology. (On the other hand, immersions 
form an open set in the fine $\Cscr^1$ topology; see e.g.\ \cite[Sect.\ 2.15]{Narasimhan1985}.) 
We find a disc $F(\D)\subset \B^n$ satisfying Theorem \ref{th:main} as a limit of an inductively 
constructed sequence of properly embedded holomorphic discs $F_k\colon \D\hra \B^n$, where each
$F_k$ is holomorphic on a neighborhood of the closed disc $\overline \D$.
In the induction step we are given a properly embedded complex disc $\Sigma_k=F_k(\D) \subset \B^n$ with smooth boundary
$b\Sigma_k =F_k(b\D) \subset b\B^n$ which intersects the Legendrian curve  $\Lambda=f(\R)\subset b\B^{n}$
transversely at a pair of points $p^\pm_k$. (There may be other intersection points. 
The disc $\Sigma_k$ is actually the intersection of a somewhat bigger embedded holomorphic disc in $\C^n$
with the ball $\B^n$.) Let $E^+_k$ and $E^-_k$ be compact arcs in $\Lambda$ with an 
endpoint $p^+_k$ and $p^+_-$, respectively.
The first step is to find a small perturbation of $\overline \Sigma_k$, fixing the points $p^\pm_k$,
such that the boundary of the new embedded disc intersects the arcs $E^\pm_k$ only at the points $p^\pm_k$
(see Lemma \ref{lem:position}). The next disc $\Sigma_{k+1}$ is then obtained by stretching 
$\Sigma_k$ along the arcs $E^\pm_k$ to the other endpoints
$q^\pm_k$ of $E^\pm_k$ so that the stretched out part lies in a thin tube around 
$E^+_k\cup E^-_k$; see Lemma \ref{lem:special}. The sequence of embedded discs obtained in 
this way converges in the weak $\Cscr^1$ topology, and also in the fine $\Cscr^0$ topology on 
$\overline \D\setminus \{\pm 1\}$, to an embedded disc satisfying Theorem \ref{th:main}.
The details are given in Sect.\ \ref{sec:proof}. 

In conclusion, we mention the following natural question that was raised by a referee.

\begin{problem}
Does Theorem \ref{th:main} hold for discs in an arbitrary strongly pseudoconvex domain 
in $\C^n$, $n\ge 2$, in place of the ball?
\end{problem}

Our methods at several steps crucially depend on having real analytic boundary,
and Theorem \ref{th:approximation} (on Carleman approximation of Legendrian curves by embedded real analytic Legendrian 
images of $\R$) is currently proved only for the standard contact structure on the round sphere in $\C^n$, 
given by the distribution of complex tangent planes.


\section{Densely embedded real analytic Legendrian curves} 
\label{sec:dense}

In this section we prove Theorem \ref{th:approximation}.
Let $n\in\N$. We denote the coordinates on $\R^{2n+1}$ by $(x,y,z)$, where
$x=(x_1,\ldots,x_n)\in\R^n$, $y=(y_1,\ldots,y_n)\in\R^n$, and $z\in \R$.
The standard contact structure $\xi_0=\ker\alpha_0$ on $\R^{2n+1}$ is given by the $1$-form
\begin{equation}\label{eq:alpha0}
	\alpha_0=dz+\sum_{i=1}^n x_idy_i = dz+x dy.
\end{equation}
A smooth immersion $f\colon M\to (\R^{2n+1},\xi_0=\ker\alpha_0)$ from a smooth manifold $M$
is said to be {\em isotropic} if $f^*\alpha_0=0$; an isotropic
immersion is {\em Legendrian} if $M$ has the maximal possible dimension $n$.
(See the monographs by Cieliebak and Eliashberg \cite{CieliebakEliashberg2012} and Geiges \cite{Geiges2008}
for background on contact geometry.) In this paper we only consider maps from $\R$ 
and call them Legendrian irrespectively of the dimension of the  manifold.

Let $k\in\Z_+=\{0,1,2,\ldots\}$. A neighborhood of a $\Cscr^k$ map $f_0\colon \R\to\R^n$ in the {\em fine $\Cscr^k$ topology}
on the space $\Cscr^k(\R,\R^n)$  is of the form
\[
	\bigl\{f\in \Cscr^k(\R,\R^n): |f^{(j)}(t)-f_0^{(j)}(t)| < \epsilon(t)\ \ \forall t\in \R\ \ \forall j=0,\ldots,k\bigr\},
\]
where $\epsilon:\R\to (0,+\infty)$ is a positive continuous function, $f^{(j)}(t)$ denotes the derivative
of order $j$ of $f$ at the point $t\in\R$, and $|\cdotp |$ is the standard Euclidean norm on $\R^n$.
(See Whitney \cite{Whitney1936} or Golubitsky and Guillemin \cite{GolubitskyGuillemin1973} for more information.)

%
%
\begin{proof} [Proof of Theorem \ref{th:approximation}]
Let $(x,y,z)$ be coordinates on $\R^{2n+1}$ as above.
We consider $\R^{2n+1}$ as the standard real subspace of $\C^{2n+1}$ 
and use the same letters to denote complex coordinates on $\C^{2n+1}$.
The contact form $\alpha_0$ on $\R^{2n+1}$ \eqref{eq:alpha0} then extends to a
holomorphic contact form on $\C^{2n+1}$, and a holomorphic map $f\colon D\to \C^{2n+1}$
from a domain $D\subset\C$ will be called Legendrian if $f^*\alpha_0=0$. 

Recall that $\D=\{\zeta \in\C:|\zeta|<1\}$. Let $r>0$. Note that a holomorphic map $f\colon r\D \to  \C^{2n+1}$
is real on the real axis if and only if $f(\zeta) = \overline{f(\bar \zeta)}$ for all $\zeta\in r\D$. 
For any holomorphic map $f$ as above, the symmetrized map
\begin{equation}\label{eq:symmetrize}
	\tilde f(\zeta) = \frac{1}{2} \left(f(\zeta)+\overline{f(\bar \zeta)} \right)
\end{equation}
is holomorphic and real on the real axis.

Let $f_0=(x_0,y_0,z_0) \colon\R\to \R^{2n+1}$ be a given continuous map.
Given a continuous function $\epsilon:\R\to (0,+\infty)$ and a number $\eta_0>0$,
we shall construct a sequence of holomorphic polynomial Legendrian maps $f_j\colon \C \to \C^{2n+1}$ 
and a sequence $\eta_j>0$ such that the following conditions hold for every $j\in\N$, where  (d$_1$) is vacuous:
\begin{itemize}
\item[\rm (a$_j$)] $f_j(\zeta) = \overline {f_j(\bar \zeta)}$ for $\zeta\in \C$,
\vspace{1mm}
\item[\rm (b$_j$)] $f_j\colon [-j,j] \hra\R^{2n+1}$ is a Legendrian embedding,
\vspace{1mm}
\item[\rm (c$_j$)] $|f_j(t)-f_{0}(t)| < \epsilon(t)$ for $t\in [-j,j]$,
\vspace{1mm}
\item[\rm (d$_j$)] $||f_{j}-f_{j-1}||_{\Cscr^1((j-1)\cd)}<\eta_{j-1}$, and 
\vspace{1mm}
\item[\rm (e$_j$)] $0<\eta_j<\eta_{j-1}/2$ and every $\Cscr^1$ map $g\colon [-j,j] \to \R^{2n+1}$ satisfying 
the condition $||g-f_j||_{\Cscr^1( [-j,j])}<2\eta_j$ is an embedding.
\end{itemize}
It is immediate that the sequence $f_j$ converges uniformly on compacts
in $\C$ to a holomorphic Legendrian map $f=\lim_{j\to\infty}f_j \colon \C\to \C^{2n+1}$ whose restriction
to $\R$ is a real analytic Legendrian embedding $f\colon \R\hra\R^{2n+1}$ 
satisfying  $|f(t)-f_0(t)|\le \epsilon(t)$ for all $t\in\R$. 
This will prove Theorem \ref{th:approximation}.

We begin by explaining the base of the induction $(j=1)$. Set 
\[
	\epsilon_1 = \min\{\epsilon(t): -1 \le t\le 1\} >0.
\]
We can find a smooth Legendrian embedding $g \colon [-1,1]\hra \R^{2n+1}$ such that
\begin{equation}\label{eq:estg1}
	|g(t)-f_0(t)|< \epsilon_1/2 \quad \text{for all}\ \ t\in [-1,1].
\end{equation}
(See e.g.\ Geiges \cite[Theorem 3.3.1, p.\ 101]{Geiges2008}, Gromov \cite{Gromov1996PM}, or
\cite[Theorem A.6]{AlarconForstnericLopez2017CM}.)
Let us recall this elementary argument for the case $n=1$, i.e., in $\R^3$. From the contact equation $dz+xdy=0$ we see that 
the third component $z$ of a Legendrian curve $g(t)=(x(t),y(t),z(t))$ for $t\in [-1,1]$ is uniquely determined by the formula
\begin{equation}\label{eq:z-component}
	z(t)=z(0)-\int_0^t x(s)\dot y(s) ds,\qquad t\in [-1,1].
\end{equation}
Hence, a loop $\gamma$ in the Lagrangian $(x,y)$-plane adds a displacement 
for the amount $-\int_\gamma xdy$ to the $z$-variable. By Stokes's theorem, this equals the negative 
of the signed area of the region enclosed by $\gamma$. Hence, it suffices to approximate the $(x,y)$-projection 
of the given continuous arc $f_0\colon [-1,1]\to \R^3$ by a smooth immersed arc containing small  loops whose signed area creates 
a suitable displacement in the $z$-direction, thereby uniformly approximating $f_0$ by a
smooth Legendrian arc $g \colon [-1,1]\to\R^3$. 
Furthermore, by a general position argument (see e.g.\ \cite{GolubitskyGuillemin1973})
we can approximate its Lagrange projection $g_L=(x,y):[-1,1] \to\R^2$ 
in $\Cscr^2([-1,1])$ by a smooth immersion with only simple (transverse) double
points. Note that $g(t_1)=g(t_2)$ for some  pair of numbers $t_1\ne t_2$ if and only if $g_L(t_1)=g_L(t_2)$
and $\int_{t_1}^{t_2} x(s)\dot y(s) ds = 0$. 
Since $g_L$ has at most finitely many double point loops, we can arrange by a generic
$\Cscr^2$-small perturbation of $g_L$ away from its double points that the signed area enclosed by any 
of its double point loops is nonzero, thereby ensuring that the new map $g$ is a Legendrian 
embedding and the estimate \eqref{eq:estg1} still holds.
Furthermore, we see from \eqref{eq:z-component} that $\Cscr^2$ approximation
of the Lagrange projection $g_L$ gives $\Cscr^1$ approximation of the last component $z$.
A similar argument applies in any dimension. 

Let $g$ be as above, satisfying \eqref{eq:estg1}.
Pick a number $\delta$ with $0<\delta < \epsilon_1/2$. We claim that there is a 
polynomial Legendrian map $f_1=(x_1,y_1,z_1) \colon \C\to\C^{2n+1}$ satisfying 
$f_1(\zeta)= \overline{f_1(\bar \zeta)}$ and 
\begin{equation}\label{eq:estg3}
	\|f_1 -  g\|_{\Cscr^1([-1,1])} < \delta < \epsilon_1/2.
\end{equation}
To find such $f_1$, we  apply  Weierstrass's  theorem to 
approximate the Lagrange projection $g_L=(x,y)\colon [-1,+1] \to \R^{2n}$ of $g$ 
in $\Cscr^2([-1,1])$ by a holomorphic polynomial map $(x_1,y_1)\colon \C\to \C^{2n}$ 
which is real on the real axis (the last condition
is easily ensured by replacing the approximating map with its symmetrization, see \eqref{eq:symmetrize}).
We then obtain the last component $z_1$ of $f_1$ by integration as in \eqref{eq:z-component}:
\[
	z_{1}(\zeta)= z_0(0)-\int_0^\zeta x_{1}(t)\dot y_{1}(t)\, dt.
\]
Note that $z_1$ is then also real on the real axis. If $\delta>0$ is chosen small enough,
then $f_1\colon [-1,1]\to \R^{2n+1}\subset \C^{2n+1}$ is a Legendrian embedding.
Thus, conditions (a$_1$) and (b$_1$) hold, and the inequalities \eqref{eq:estg1} and  \eqref{eq:estg3} 
yield (c$_1$). Condition (d$_1$) is vacuous. Pick a constant $\eta_1>0$ such that condition  (e$_1$) holds. 
This provides the base of the induction.

%
%
Assume that for some $j\in \N$ we have found maps $f_1,\ldots,f_j$
and numbers $\eta_1,\ldots,\eta_j$ satisfying conditions (a$_k$)--(e$_k$)
for $k=1,\ldots,j$.  In particular, there is a number $0<\sigma<1$ such that 
$f_j\colon [-j-\sigma,j+\sigma] \hra \R^{2n+1}$  is a Legendrian embedding. Set 
$E_j=j\cd\, \cup [-j-1,j+1] \subset\C$. After decreasing $\sigma>0$ if necessary, the same arguments 
as in the initial step furnish a map $\tilde f_j\colon (j+\sigma)\D  \cup  [-j-1,j+1] \to \C^{2n+1}$
which equals $f_j$ on $(j+\sigma)\d$ and such that $f_j\colon [-j-1,j+1] \hra \R^{2n+1}$ 
is a smooth Legendrian embedding satisfying 
\[
	|\tilde f_j(t)-f_0(t)|<\epsilon(t),\qquad t\in [-j-1,j+1].
\]
Write $\tilde f_j=(\tilde x_j,\tilde y_j,\tilde z_j)$. We apply 
Mergelyan's approximation theorem and the symmetrization argument (cf.\ \eqref{eq:symmetrize})
in order to find a holomorphic polynomial map $(x_{j+1},y_{j+1})\colon  \C\to \C^{2n}$ which is real on $\R$ 
and approximates  the map $(\tilde x_j,\tilde y_j)$ as closely as desired in $\Cscr^2(E_j)$. Setting
\[
	z_{j+1}(\zeta)= z_0(0)-\int_0^\zeta x_{j+1}(t)\dot y_{j+1}(t)\, dt,\quad \zeta\in\C,
\]
we get a polynomial Legendrian map $f_{j+1}=(x_{j+1},y_{j+1},z_{j+1})\colon \C\to \C^{2n+1}$ 
which is real on $\R$ and approximates $f_j$  in $\Cscr^1(E_j)$. 
If the approximation is close enough then $f_{j+1}$  satisfies conditions (a$_{j+1}$)--(d$_{j+1}$).
Finally, pick $\eta_{j+1}>0$ satisfying condition (e$_{j+1}$) and the induction may proceed.
This completes the proof of Theorem \ref{th:approximation}.
\end{proof}
%
%


\section{A general position result} 
\label{sec:generalposition}

In this section we prove a general position result (see Lemma \ref{lem:position})
which is used in the proof of Theorem \ref{th:main}. For simplicity of notation we focus on the case $n=2$,
although the proof carries over to the higher dimensional case $n>2$.  

Recall that $\D=\{z\in\C: |z|<1\}$. Let $\T=b\d=\{z\in \C: |z|=1\}$. For any $k\in \Z_+ \cup\{+\infty\}$ 
and domain $D\subset \C$ with smooth boundary we denote by $\Ascr^k(D)$ the space of 
functions $h\colon \overline D \to\C$ of class $\Cscr^k(\overline D)$ which are holomorphic in $D$, 
and we write $\Ascr^0(D)=\Ascr(D)$. We also introduce the function space
\begin{equation}\label{eq:H}
	\Hscr = \{h=u+\imath v\in \Ascr^\infty(\d) : h(\bar z)=\overline{h(z)},\ \  u|_{\T}=0\ \text{near}\ \pm1\}.
\end{equation}
Note that for every $h=u+\imath v \in \Hscr$ we have $h(\pm 1)=0$, $u(\bar z)=u(z)$ and $v(\bar z)=-v(z)$ 
for all $z\in \cd$; in particular, $v(x)=0$ for all $x\in [-1,1]$. Note that $\Hscr$ is a nonclosed real vector subspace
of $\Ascr^\infty(\d)$. Every function in $\Hscr$ is uniquely determined by a smooth real function 
$u\in \Cscr^\infty(\T)$ supported away from the points $\pm1$ and satisfying $u(e^{\imath t}) = u(e^{-\imath t})$
for all $t\in \R$. Indeed, if $u\colon \cd\to \R$ is the harmonic extension of $u\colon \T\to\R$
and $v$ is the harmonic conjugate of $u$ determined by $v(0)=0$, 
then the function $h=u+\imath v$ belongs to $\Hscr$ and is given by the classical integral formula
\begin{equation}\label{eq:Poisson}
	h(z)= T[u](z) = \frac{1}{2\pi} 
	\int_0^{2\pi} \frac{e^{\imath\theta}+z}{e^{\imath\theta}-z}\, u(e^{\imath\theta}) \, d\theta, \quad z\in \d.
\end{equation}
The real part of the integral operator on the right hand side above is the Poisson integral, while
the imaginary part  is the Hilbert (conjugate function) transform. We write
\begin{equation}\label{eq:Hplus}
	\Hscr^\pm  = \{h=u+\imath v \in \Hscr : \pm u\ge 0\},\quad \Hscr^\pm_* = \Hscr^\pm \setminus\{0\}.
\end{equation}
Note that the sets $\Hscr^\pm$ and $\Hscr^\pm_*$ are cones, i.e., closed under addition and 
multiplication by nonnegative (resp.\ positive) real numbers, and we have $\Hscr^+ \cap \Hscr^- =\{0\}$. Furthermore,
\begin{equation}\label{eq:uatpm1}
	h=u+\imath v\in \Hscr^+_* \ \Longrightarrow \ u>0\ \text{on}\ \d,\ \ \frac{\di u}{\di x}(-1)>0,\ \ \frac{\di u}{\di x}(1)<0
\end{equation}
where the last two inequalities follow from the Hopf lemma (since $u|_{\T}=0$ near $\pm 1$).
The use of these function spaces will become apparent in the proof of Theorem \ref{th:main} in the following section.

Let $\sigma\colon \C^2_*:=\C^2\setminus\{0\} \to\CP^1$ denote the projection onto the Riemann sphere
whose fibres are complex lines through the origin. 

%
%
\begin{lemma}\label{lem:position}
Let $f=(f_1,f_2):\cd\to \C^2_*$ be a map of class $\Ascr^\infty(\d)$ such that
\begin{itemize}
\item[\rm (a)] $|f|^2:=|f_1|^2+|f_2|^2\le 1$ on $(-1,1)=\d\cap \R$, 
\vspace{1mm}
\item[\rm (b)] $|f|>1$ on $\T\setminus \{\pm1\}$, and 
\vspace{1mm}
\item[\rm (c)] $\sigma\circ f\colon \cd\to\CP^1$ is an immersion.
\end{itemize}
Assume that $E\subset b\B^2$ is a smoothly embedded compact curve 
such that $f(\pm 1)\notin E$. Given a number $\eta\in (0,1)$,  there is a function $h\in \Hscr^+_*$ 
arbitrarily close to $0$ in $\Cscr^1(\cd)$ such that the immersion
\begin{equation}\label{eq:fh}
	f_h:=e^{-h}f:\cd\to \C^2_* 
\end{equation}
satisfies the following conditions:
\begin{enumerate} 
\item $|f_h|<1$ on $(-1,1)$ and $|f_h|>(1-\eta)|f|+\eta>1$ on $\T\setminus \{\pm1\}$,
\vspace{1mm}
\item $f_h$ is transverse to $b\B^2$, and   
\vspace{1mm}
\item $f_h(\cd)\cap E=\varnothing$. 
\end{enumerate}
\end{lemma}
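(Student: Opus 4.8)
The plan is to modify $f$ by a multiplier $e^{-h}$ with $h\in\Hscr^+_*$ small, exploiting two independent effects of such a multiplier. First, since $\mathrm{Re}\,h=u\ge 0$ with $u>0$ on $\D$ (by \eqref{eq:uatpm1}), the factor $|e^{-h}|=e^{-u}$ strictly shrinks $|f|$ in the interior of $[-1,1]$, so hypothesis (a) becomes the strict inequality $|f_h|<1$ on $(-1,1)$; and since $u|_\T=0$ near $\pm1$ and $u\ge 0$ elsewhere on $\T$, we get $|f_h|=e^{-u}|f|\le |f|$ on $\T$ with equality near $\pm1$, which together with $|f|>1$ on $\T\setminus\{\pm1\}$ gives, for $h$ small in $\Cscr^1$, the estimate $|f_h|>(1-\eta)|f|+\eta$ there. (More precisely: on $\T\setminus\{\pm1\}$ write $e^{-u}\ge 1-u$ and bound $u$ in terms of $\eta$ and the gap $|f|-1$ using compactness of the region where $|f|\ge 1+\delta$; near $\pm1$ one has $u=0$ identically. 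This is the routine part of conclusion (1).)

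Next I would address transversality, conclusion (2). Transversality of $f_h$ to $b\B^2$ means $d(|f_h|^2)\ne 0$ at every point of $f_h^{-1}(b\B^2)$; since $|f_h|^2 = e^{-2u}|f|^2$, at a point where $|f_h|=1$ this amounts to $d(\log|f|^2 - 2u)\ne 0$, i.e. the real $1$-form $\omega := d\log|f|^2 - 2\,du$ is nonvanishing on the (compact, by conclusion (1) forcing it into the open disc away from $\pm1$) set $|f_h|=1$. The key observation is that $\Hscr^+_*$ is large enough: the family $\{2u : h=u+\imath v\in\Hscr^+_*\}$, parametrized by boundary data $u|_\T\ge 0$ supported away from $\pm1$, has enough freedom that for a generic choice of $h$ the form $d\log|f|^2$ is never equal to $2\,du$ on the relevant set. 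I would make this precise with a parametrized transversality (Thom) argument: introduce a finite-dimensional family $h_s$, $s$ in a parameter space, of functions in $\Hscr^+_*$ (harmonic extensions of bump-type boundary data concentrated at finitely many points of $\T\setminus\{\pm1\}$, scaled small), show the evaluation map $(z,s)\mapsto \omega_s(z)$ on the zero set is submersive onto a neighborhood of $0$ in the cotangent fibre, and conclude that for a.e. $s$ (hence arbitrarily small $h_s$) the form $\omega_s$ is nonvanishing where $|f_{h_s}|=1$.

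For conclusion (3), $f_h(\cd)\cap E=\varnothing$: by conclusion (1) applied with the given $\eta$, the set $f_h(\cd)$ meets $\overline{\B^2}$ only along $f_h((-1,1))\subset\B^2$ and the two points $f_h(\pm1)=f(\pm1)$ (since $h(\pm1)=0$), which by hypothesis lie outside $E$. Thus $f_h(\cd)\cap b\B^2 \subset \{f(\pm1)\}$ is disjoint from $E$ automatically, provided we also keep $f_h$ close enough to $f$ near $\pm1$ — which holds since $h$ is small in $\Cscr^1(\cd)$ and $E$ is closed with $f(\pm1)\notin E$. (One must check $f_h(\cd)$ does not meet $E\subset b\B^2$ at any other point, but any such point would be a point of $f_h(\cd)\cap b\B^2$, and we have just localized that intersection.) So conclusion (3) follows from (1) together with continuity, requiring no new work.

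Finally, $f_h = e^{-h}f$ remains an immersion into $\C^2_*$ for $h$ small, and $\sigma\circ f_h = \sigma\circ f$ since $e^{-h}$ is a nonvanishing scalar, so (c) is preserved verbatim — no condition is lost. The one genuine obstacle is conclusion (2): ensuring transversality while staying inside the restrictive cone $\Hscr^+_*$ (one-sided sign on $u$, boundary vanishing near $\pm1$, realness). The resolution is that the \emph{interior} values of $u$ and $du$ still vary freely enough — the Poisson/Hilbert transform $T$ of \eqref{eq:Poisson} is an open map into an appropriate space of harmonic functions on compact subsets of $\D$ — so a parametrized Sard/transversality argument goes through; I would spend the bulk of the write-up making that genericity statement precise and checking that the perturbation can be taken arbitrarily $\Cscr^1(\cd)$-small while simultaneously honoring the $\eta$-estimate in (1).
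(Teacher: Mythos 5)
Your handling of conclusion (3) contains the essential gap, and it is unfortunately the whole content of the lemma. You write that conclusion (1) forces $f_h(\cd)\cap b\B^2\subset\{f(\pm1)\}$; but conclusion (1) only constrains $|f_h|$ on the segment $(-1,1)$ and on the circle $\T$, and says nothing about $|f_h|$ at the remaining (most) points of $\d$. In general $\{z\in\cd: |f_h(z)|=1\}$ is a nontrivial one-dimensional set (a union of closed Jordan curves lying in $\d\cup\{\pm1\}$), so $f_h(\cd)\cap b\B^2$ is a whole curve, not just $\{f(\pm1)\}$. This is made explicit in Remark~\ref{rem:Ch} of the paper, which discusses precisely that level set $C$ and its image $f_h(C)$. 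Consequently there is nothing ``automatic'' about avoiding $E$: you must actually steer the image curve $f_h(\cd)\cap b\B^2$ off of $E$, and that requires a genuine transversality argument inside the restrictive cone $\Hscr^+_*$.

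The paper's argument for (3) is a second parametrized transversality step, and it is the substantive novelty. One first confines the problematic set $\{z\in\cd: f_h(z)\in E\}$ to a fixed compact $K\subset\d\setminus(-1,1)$ (using the Hopf-lemma estimates \eqref{eq:uatpm1} near $\pm1$ and compactness on the collars of $(-1,1)$ and $\T$), then proves a pointwise nondegeneracy claim: for each $z\in\d\setminus(-1,1)$ there are $h_1,h_2\in\Hscr^+$ whose values at $z$ are $\R$-linearly independent (this is where the explicit kernel \eqref{eq:Tuj}, Dirac masses at $\pm\imath$, and M\"obius precompositions come in). Together with hypothesis (c), which makes $\sigma\circ f$ an immersion, this yields finitely many $h_1,\dots,h_N\in\Hscr^+_*$ such that $(z,t)\mapsto \exp(-\sum t_j h_j(z))f(z)$ is a submersion over $K$; Sard then gives, for generic small $t$ with all $t_j>0$, that $f_t(K)$ misses the one-dimensional curve $E$ by dimension count. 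None of this is in your write-up, and conclusion (1) cannot substitute for it.

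Your treatments of (1) and (2) are essentially correct. For (2), note the paper is cleaner than your finite-family bump construction: take a single $h=u+\imath v\in\Hscr^+_*$ and the one-parameter family $\rho_{th}=-tu+\log|f|$; since $u>0$ on $\D$ the $t$-derivative never vanishes, so $(z,t)\mapsto\rho_{th}(z)$ is automatically a submersion into $\R$ and Sard gives genericity in $t$ directly, with no need to engineer a spanning family. Your version would work but is more machinery than needed; the machinery you build is, however, in the right spirit for what is actually required in step (3), where a multi-parameter family and the $\R$-spanning claim are indispensable.
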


\begin{remark}\label{rem:Ch}
The main point to ensure injectivity is to achieve condition (3). By dimension reasons, this is easly 
done by a more general type of perturbation of $f$. However, we use the specific perturbations
obtained by multiplying with a function $e^{-h}$ with $h\in \Hscr_*^+$ in order to be able to
control the very subtle induction process in the proof of Theorem \ref{th:main}.
Note that $\sigma\circ f_h=\sigma\circ f\colon\cd\to \CP^1$ is an immersion by the assumption,
and hence $f_h$ is an immersion. If $f_h$ satisfies the conclusion of the lemma, then the set
\[
	C=\{z\in \cd : f_h(z)\in b\B^2\} \subset \d\cup\{\pm1\}
\]
is a smooth, closed, not necessarily connected curve containing the points $\pm1$, and 
its image $f_h(C) = f_h(\cd) \cap b\B^2$ is a smooth curve  disjoint from $E$. Hence, $C$ and $f_h(C)$ 
are finite unions of pairwise disjoint smooth Jordan curves. Each connected component of
$f_h(C)$ bounds a connected component of the complex curve $f_h(\d)\cap \B^2$
(a properly immersed complex disc in $\B^2$). Since $|f_h|<1$ on $(-1,1)$, there is a component 
$\Omega$ of $\D\setminus C$ containing $(-1,1)$, and $b\Omega\subset \d\cup\{\pm 1\}$ is a closed
Jordan curve containing the points $\pm 1$. This component $\Omega$ will be of main interest 
in the proof of Theorem \ref{th:main}. 
\qed \end{remark}

\begin{proof}
Given $h=u+\imath v\in \Hscr$, we  define the functions $\rho=\rho_0$ and $\rho_h$ by
\begin{equation}\label{eq:def-rho}
	\rho=\log|f|\colon \cd\to \R,\quad \rho_h:= \log |e^{-h}f| = -u + \rho : \cd\to \R.
\end{equation}
Conditions (a) and (b) on $f$ imply that 
\begin{equation}\label{eq:properties-rho}
	\text{$\rho\le 0$ on $[-1,1]=\cd\cap\R$, \ \ \ \ $\rho(\pm 1)=0$, \ \ \ \  $\rho>0$ on $\T\setminus \{\pm1\}$.}
\end{equation}
It is obvious that for any function $h\in \Hscr^+_*$ with sufficiently small $\Cscr^0(\cd)$ norm
and such that the support of $u|_\T=\Re h|_\T$ avoids a certain fixed neighborhood of the points $\pm1$, 
the map $f_h$ given by \eqref{eq:fh} satisfies condition (1) of the lemma.
(Recall that $\Re h|_\T$ vanishes near the points $\pm 1$ by the definition of the space $\Hscr$.)
In particular, for every fixed $h\in \Hscr^+_*$ this holds for the map $f_{th}$ for all small enough $t>0$.

Note that the map $f_h$ \eqref{eq:fh} intersects the sphere $b\B^2$ 
transversely if and only if $0$ is a regular value of the function $\rho_h$ \eqref{eq:def-rho}.
From \eqref{eq:properties-rho} it follows that 
$\frac{\di \rho}{\di x}(-1)\le 0$ and $\frac{\di \rho}{\di x}(1)\ge 0$. Together with \eqref{eq:uatpm1}
we see that for every $h\in \Hscr^+_*$ we have 
\[
	\frac{\di \rho_h}{\di x}(-1) =\frac{\di \rho}{\di x}(-1) - \frac{\di u}{\di x}(-1) <0,\quad
	\frac{\di \rho_h}{\di x}(1)>0.
\]
Replacing $f$ by $e^{-h}f$ for some such $h$ close to $0$, we may assume that $\rho=\log |f|$ satisfies these conditions.
Hence, there are discs $U^\pm\subset \C$ around the points $\pm1$, respectively, such that 
$d\rho\ne 0$ on $\cd\cap (\overline U^+\cup \overline U^-)$. Since this set is compact, 
it follows that for all $h\in \Hscr$ with sufficiently small $\Cscr^1(\cd)$ norm we have that
\begin{equation}\label{eq:drhoh}
	d\rho_h\ne 0\ \ \text{on}\ \  \cd\cap (\overline U^+\cup \overline U^-).
\end{equation}
Furthermore, since the curve $E$ does not contain the points $f(\pm 1)$,
we may choose the discs $U^\pm$ small enough such that 
\begin{equation}\label{eq:fh3}
	f_h\big(\cd \cap (\overline U^+ \cup \overline U^-)\bigr) \cap E =\varnothing
\end{equation}
holds for all $h\in \Hscr$ sufficiently close to $0$ in $\Cscr^1(\cd)$.

Recall that $\rho=\log|f| <0$ on $(-1,1)=\d\,\cap\R$. Hence,  
there is an open set $U_0\Subset \d$ containing the compact interval $(-1,1)\setminus (U^+\cup U^-)\subset\R$
such that $\rho\le -c<0$ on $\overline U_0$ for some constant $c>0$.
Since $\rho>0$ on $\T\setminus \{\pm1\}$, a similar argument gives an open set 
$U_1\Subset \C$ containing the compact set $\T\setminus (U^+\cup U^-)$ 
(the union of two closed circular arcs)  
such that $\rho\ge c'>0$ on $\overline U_1\cap \cd$ for some $c'>0$. 
It follows that for all $h\in \Hscr$ sufficiently close to $0$ in $\Cscr^1(\cd)$ we have that
$\rho_h<0$ on $\overline U_0$, $\rho_h>0$ on $\overline U_1\cap \cd$, and hence
\begin{equation}\label{eq:fh4}           
	f_h \bigl(\cd\cap(\overline U_0\cup \overline U_1)\bigr) \cap b\B^2 =\varnothing.
\end{equation}
For such $h$ it follows in view of \eqref{eq:drhoh} that 
\begin{equation}\label{eq:setK}
	\{z\in\cd: f_h(z)\in b\B^2,\ d\rho_h(z) = 0\} \subset K:= \cd\setminus (U_0\cup U_1\cup U^+\cup U^-).
\end{equation}
Note that the set on the left hand side above is precisely the set of points in $\cd$ at which 
the map $f$ fails to be transverse to the sphere $b\B^2$. The set $K$ is compact and 
contained in $\d \setminus (-1,1)$. Pick $h=u+\imath v \in \Hscr^+_*$ and consider 
the family of functions $\rho_{th}= - tu + \rho$ for $t\in \R$. Since $\di \rho_{th} / \di t = -u<0$ on $\D$, 
transversality theorem (see Abraham \cite{Abraham1963}) 
implies that for a generic choice of $t$, $0$ is a regular value of the function $\rho_{th}|_\D$. 
By choosing $t>0$ small enough and taking into account also \eqref{eq:drhoh} and \eqref{eq:fh4}, 
we infer that the map $f_{th}=e^{-th}f:\cd\to\C^2_*$ is transverse to $b\B^2$ (hence condition (2) holds)
and it also satisfies condition (1). Replacing $f$ by $f_{th}$, we may assume that $f$ 
satisfies conditions (1) and (2).

It remains to achieve also condition (3) in the lemma.
From \eqref{eq:fh3} and \eqref{eq:fh4} it follows that $\{z\in \cd: f_h(z)\in E\}$ 
is contained in the compact set $K\subset \d$ defined in \eqref{eq:setK}.
To conclude the proof, it suffices to find finitely many functions $h_1,\ldots,h_N\in \Hscr^+_*$ such that,
writing $t=(t_1,\ldots,t_N)\in\R^N$, the family of maps 
\begin{equation}\label{eq:ft}
	f_t(z)= \exp\biggl(-\sum_{j=1}^N t_j h_j(z)\biggr) f(z),\quad z\in \cd 
\end{equation}
satisfies $f_t(\cd) \cap E =\varnothing$ for a generic choice of $t\in\R^N$ near $0$. 
By choosing $t=(t_1,\ldots, t_N)\in \R^N$ close enough to $0$ and such that $t_j>0$ for 
all $j=1,\ldots,N$, the map $f_t$ will also satisfy conditions (1) and (2) in the lemma,
thereby completing the proof.

\vspace{1mm}

\noindent {\em Claim:} For every point $z\in \d\setminus (-1,1)$ there exist functions 
$h_1,h_2\in \Hscr^+$ such that the values $h_1(z),\, h_2(z)\in\C$ are $\R$-linearly independent.

\begin{proof}[Proof of the claim]
Let $\delta_\theta$ denote the probability measure on $\T$ representing the 
Dirac mass of the point $e^{\imath \theta}\in \T$.
Choose a sequence of smooth nonnegative even functions $u_j\colon \T \to \R_+$ 
supported near $\pm \imath$ such that $u_j(e^{\imath \theta})=u_j(e^{-\imath \theta})$ 
for all $\theta\in\R$ and 
\[
	\lim_{j\to\infty} \frac{1}{2\pi} u_j  d\theta  = \delta_{\pi/2}+\delta_{-\pi/2}
\]
as measures. From \eqref{eq:Poisson} we have that 
\begin{equation}\label{eq:Tuj}
	\lim_{j\to\infty} T[u_j](z) =  T[\delta_{\pi/2}+\delta_{-\pi/2}](z) 
	= \frac{i+z}{i-z}+\frac{-i+z}{-i-z} 
	= 2\frac{1-|z|^4 - 2\imath \Im (z^2)}{|1+z^2|^2}.
\end{equation}
The imaginary part of this expression vanishes precisely when $\Im(z^2)=0$ which is the union
of the two coordinate axes. Since $u_j$ is an even function, we see that 
$h_j=T[u_j]$ is real on the segment $J=\{\imath y : y\in (-1,1)\}=\D\cap \imath\R$, and 
it is nonvanishing at any given point $z_0=\imath y_0 \in J\setminus \{0\}$ for all big $j\in\N$ as follows from \eqref{eq:Tuj}. 
Let $\phi_a(z)=(z-a)/(1-a z)$ for $a\in (-1,1)$; this is a holomorphic automorphism of the disc 
which maps the interval $[-1,1]$ to itself, and if $a\ne 0$ then $\phi_a(J)\cap J=\varnothing$.
Note that $\phi_a(\bar z) = \overline{\phi_a(z)}$ and $\phi_a(\pm 1) = \pm 1$; hence, 
the precomposition $h\mapsto h\circ \phi_a$ preserves the class $\Hscr^+_*$. 
Choosing $a\in (-1,1)\setminus \{0\}$ we have that $\Im (h_j\circ \phi_a)(z_0)= \Im h_j(\phi_a(z_0))\ne 0$
for $j\in \N$ big enough as is seen from \eqref{eq:Tuj} and the fact that the point $\phi_a(z_0)\in\D$ 
does not lie in the union of the coordinate axes.
This gives two functions in $\Hscr^+_*$, namely $h_j$ and $h_j\circ\phi_a$,
whose values at the given point $z_0=\imath y_0 \in J\setminus \{0\}$ are $\R$-linearly independent.
This establishes the claim for points in $J\setminus \{0\}$, and for other points in $\d\setminus (-1,1)$
we get the same conclusion by precomposing with an automorphisms $\phi_a$, $a\in (-1,1)$.
\end{proof}

Since the set $K$ \eqref{eq:setK} is compact and contained in 
$\d\setminus (-1,1)$, the above claim yields finitely many functions $h_1,\ldots,h_N\in \Hscr^+_*$
such that for every point $z\in K$ the vectors $h_j(z)\in \C$ $(j=1,\ldots,N)$ span $\C$ over $\R$. 
Consider the corresponding family of maps $f_t\colon \cd\to\C^2$ given by \eqref{eq:ft}.
Note that 
\[
	\frac{\di f_t(z)}{\di t_j}\bigg|_{t=0} = -h_j(z) f(z),\quad j=1,\ldots,l.
\]
It follows that the map 
\begin{equation}\label{eq:ftz}
	\cd\times \R^N\ni (z,t)\mapsto f_t(z)\in \C^2
\end{equation} 
is a submersion over $K$ at $t=0$, and hence for all $t\in \R^N$ near $0$.  Indeed, we have 
$\sigma\circ f_t=\sigma\circ f\colon \cd\to \CP^1$ which is an immersion  
by the assumption (c), while for each $z\in K$ the partial differential $\di_t f_t(z)|_{t=0}$ 
is surjective onto the radial direction $\C f(z)= \ker d\sigma_{f(z)}$ by the choice of the functions
$h_1,\ldots, h_N$. Transversality theorem \cite{Abraham1963} implies that for a generic 
$t\in \R^N$ near $0$ the map $f_t|_K \colon K \to\C^2$ misses $E$ by dimension reasons. In view of
\eqref{eq:fh3} and \eqref{eq:fh4} it follows that for any such $t$ we have $f_t(\cd)\cap E=\varnothing$.

The case $n>2$ requires a minor change in the last step of the proof. The map
\eqref{eq:ftz} is now a submersion onto its image which is an immersed complex $2$-dimensional submanifold
of $\C^n$. This submanifold intersects the curve $E$ in a set of finite linear measure, and hence
for a generic choice of $t\in \R^N$ the map $f_t|_K \colon K \to\C^n$ misses $E$ as before.
\end{proof}


\section{A lemma on conformal mappings} 
\label{sec:conformal}

The main results of this section are Lemmas \ref{lem:special} and \ref{lem:special2} on the behaviour of
biholomorphic maps from planar domains onto domains with exposed boundary points.

Let $z=x+\imath y$ denote the coordinate on $\C$. Consider the antiholomorphic involutions
\begin{equation}\label{eq:reflections}
	\tau_x(x+\imath y) = -x + \imath  y,\quad \tau_y(x+\imath y) = x - \imath  y.
\end{equation}
Note that $\tau_x\circ \tau_y=\tau_y\circ \tau_x$ is the reflection $z\mapsto-z$ across the origin $0\in\C$. The 
involutions $\tau_x,\tau_y$ generate an abelian group 
\begin{equation}\label{eq:Gamma}
	\Gamma = \langle \tau_x,\tau_y\rangle \cong\Z_2^2.
\end{equation}
A set $D\subset \C$ is said to be {\em $\Gamma$-invariant} if $\gamma(D)=D$ holds 
for all $\gamma\in\Gamma$. A map $\phi\colon D\to\C$ defined on a $\Gamma$-invariant set is said to be 
{\em $\Gamma$-equivariant} if
\[
	\phi= \gamma\circ \phi \circ\gamma\quad \text{holds for all}\ \ \gamma\in \Gamma.
\]
Since each $\gamma \in \Gamma$ is an involution, this is equivalent to $\gamma\circ \phi=\phi\circ\gamma$. 
A $\Gamma$-equivariant map $\phi\colon D\to\C$ takes $\R\cap D$ into $\R$ and 
$\imath \R\cap D$ into $\imath \R\cap D'$; in particular, $\phi(0)=0$. 
For every map $\phi$ from a $\Gamma$-invariant domain, the map
$\tilde \phi= \frac{1}{4} \sum_{\gamma\in \Gamma} \gamma\circ \phi \circ\gamma$
is $\Gamma$-equivariant.

\begin{definition}\label{def:special}
A nonempty connected domain $D\subset \C$ is {\em special} if it is bounded with $\Cscr^\infty$ smooth boundary, 
simply connected, and $\Gamma$-invariant.
\end{definition}

It is easily seen that a special domain $D$ intersects the real line in an interval
$(-a,a)$ for some $a>0$, and at the points $\pm a$ the boundary $bD$ is tangent 
to the vertical line $x=\pm a$. This interval $(-a,a)$ will be called the {\em base} of $D$. 
The analogous observation holds for the intersection of $D$ with the imaginary axis.
Recall that a biholomorphism between a pair of bounded planar domains with smooth boundaries
extends to a smooth diffeomorphism between their closures in view of the theorems by 
Carath{\'e}odory \cite{Caratheodory1913} and Kellogg \cite{Kellogg1912}. We record the following observation.

\begin{lemma}\label{lem:conformalspecial}
Assume that $D$ is a special domain (Def.\ \ref{def:special}) and $\phi\colon D\to D'$ is a biholomorphic map 
onto a bounded domain $D'=\phi(D)$ with smooth boundary satisfying
\begin{equation}\label{eq:normalize}
	\phi(0)=0\ \ \text{and}\ \ \phi'(0)>0.
\end{equation}
Then $\phi$ is $\Gamma$-equivariant if and only if the domain $D'$ is special.
In particular, a special domain $D\subset \C$ with the base $(-a,a)$ admits a $\Gamma$-equivariant 
biholomorphism $\phi\colon\D\to D$ satisfying \eqref{eq:normalize} and $\phi(\pm 1)=\pm a$.
\end{lemma}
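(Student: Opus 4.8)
The plan is to prove Lemma~\ref{lem:conformalspecial} in two parts: first the equivalence ``$\phi$ is $\Gamma$-equivariant $\iff$ $D'$ is special,'' and then the existence statement as a corollary via the Riemann mapping theorem.

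\textbf{Proof of the equivalence.}
First suppose $\phi\colon D\to D'$ is $\Gamma$-equivariant. Since $D$ is $\Gamma$-invariant and $\phi$ is a biholomorphism, $D'=\phi(D)$ satisfies $\gamma(D')=\gamma(\phi(D))=\phi(\gamma(D))=\phi(D)=D'$ for every $\gamma\in\Gamma$, so $D'$ is $\Gamma$-invariant. The remaining defining properties of a special domain — bounded, $\Cscr^\infty$ boundary, simply connected — are given: boundedness is hypothesized, simple connectedness is preserved by the homeomorphism $\phi$, and smoothness of $bD'$ is hypothesized as well (and in any case follows from Carath\'eodory--Kellogg applied to $\phi^{-1}\colon D'\to D$). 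Hence $D'$ is special.

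Conversely, suppose $D'$ is special. Both $D$ and $D'$ are simply connected bounded domains with smooth boundary, so by Carath\'eodory and Kellogg the map $\phi$ extends to a smooth diffeomorphism $\overline D\to\overline{D'}$, and likewise for $\phi^{-1}$. I want to show $\gamma\circ\phi\circ\gamma=\phi$ for each generator $\gamma=\tau_x,\tau_y$. Fix such a $\gamma$ and set $\psi=\gamma\circ\phi\circ\gamma$. Since $\gamma$ is an antiholomorphic involution of $\C$ preserving both $D$ and $D'$ ($\Gamma$-invariance of both domains), $\psi\colon D\to D'$ is again holomorphic and bijective, i.e.\ a biholomorphism onto $D'$. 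Moreover $\psi(0)=\gamma(\phi(\gamma(0)))=\gamma(\phi(0))=\gamma(0)=0$, and differentiating, $\psi'(0)=\overline{\phi'(0)}\cdot(\text{derivative of }\gamma\text{ factors})$; more carefully, writing $\gamma(z)=\overline{\lambda z}$ with $\lambda=\pm1$ one computes $\psi'(0)=\overline{\lambda}\,\overline{\phi'(0)}\,\overline{\lambda}=\overline{\phi'(0)}=\phi'(0)>0$ since $\phi'(0)$ is real positive. Thus $\psi$ and $\phi$ are two biholomorphisms $D\to D'$ with the same normalization \eqref{eq:normalize}; by the uniqueness part of the Riemann mapping theorem (the self-map $\phi^{-1}\circ\psi$ of $D$ fixes $0$ with positive derivative, hence is the identity), $\psi=\phi$. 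As this holds for both generators, $\phi$ is $\Gamma$-equivariant.

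\textbf{Proof of the existence statement.}
Let $D$ be special with base $(-a,a)$. By the Riemann mapping theorem there is a biholomorphism $\phi\colon\D\to D$, and after composing with a rotation of $\D$ we may normalize so that $\phi(0)=0$ and $\phi'(0)>0$; this $\phi$ is unique with these properties. Now $\D$ is itself a special domain (bounded, smooth boundary, simply connected, and $\Gamma$-invariant since $\tau_x,\tau_y$ preserve $|z|$), so the equivalence just proved applies with the roles of the two domains as $D\rightsquigarrow\D$, $D'\rightsquigarrow D$: since the target $D$ is special, $\phi$ is $\Gamma$-equivariant. Finally, $\Gamma$-equivariance forces $\phi(\R\cap\D)\subset\R$, so $\phi$ maps the interval $(-1,1)$ into $\R\cap D=(-a,a)$; by continuity of the boundary extension $\phi(\{\pm1\})=\{\pm a\}$, and since $\phi'(0)>0$ the map is orientation-preserving on $(-1,1)$, giving $\phi(1)=a$ and $\phi(-1)=-a$, i.e.\ $\phi(\pm1)=\pm a$ as claimed.

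\textbf{Main obstacle.}
The only genuinely delicate point is the converse direction: deducing equivariance of $\phi$ from specialness of $D'$ alone. The argument hinges on recognizing that $\gamma\circ\phi\circ\gamma$ is again a \emph{holomorphic} (not antiholomorphic) map — because it is a composition of one holomorphic and two antiholomorphic maps — and that it inherits exactly the normalization \eqref{eq:normalize}, so that rigidity of the Riemann map closes the gap. The sign bookkeeping in showing $\psi'(0)=\phi'(0)$ (rather than $\overline{\phi'(0)}$, which happens to coincide here precisely because $\phi'(0)$ is real) is the one place to be careful, but it is routine. Everything else is standard Carath\'eodory--Kellogg boundary regularity plus the elementary geometry of special domains already recorded before Definition~\ref{def:special}.
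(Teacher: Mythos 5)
Your proof is correct and takes the same route as the paper's (very terse) proof: for the key direction you observe that $\gamma\circ\phi\circ\gamma\colon D\to D'$ is again a biholomorphism satisfying the normalization \eqref{eq:normalize}, hence equals $\phi$ by uniqueness of the normalized Riemann map, and the existence statement then follows by applying this with $\D$ as source. You simply spell out the details (holomorphy of the conjugated map, the $\psi'(0)=\phi'(0)$ computation, and the endpoint bookkeeping via Carath\'eodory--Kellogg) that the paper leaves implicit.
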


\begin{proof}
Assume that $D'$ is special.  For every $\gamma\in\Gamma$ the map $\gamma \circ \phi\circ \gamma \colon D\to D'$ 
is then a well defined biholomorphism satisfying the normalization \eqref{eq:normalize}, so it equals $\phi$.
This shows that $\phi$ is $\Gamma$-equivariant. The converse is obvious.
\end{proof}

The following {\em exposing of boundary points} lemma is the main result of this section.

%
%
\begin{lemma}\label{lem:special}
Assume that $D\subset \C$ is a special domain with the base $(-a,a)$. 
Fix a number $b>a$ and set $I^+=[a,b]$, $I^-=[-b,-a]$, and $I=I^+\cup I^-$. 
Given an open neighborhood $V\subset\C$ of $I$, a number $\epsilon>0$, and an integer
$k\in\Z_+$ there exists a $\Gamma$-equivariant biholomorphism $\phi\colon D\to \phi(D)=D'$ onto a 
special domain $D'$ with the base $(-b,b)$ satisfying the following conditions:
\begin{itemize}
\item[\rm (a)] $\phi(0)=0$, $\phi'(0)>0$, $\phi(\pm a)=\pm b$, 
\vspace{1mm}
\item[\rm (b)] $D\subset D'\subset D\cup V$ (hence $D \setminus V = D' \setminus V$), and
\vspace{1mm}
\item[\rm (c)] $\|\phi-\Id\|_{\Cscr^k(\overline D\setminus V)}<\epsilon$. 
\end{itemize}
\end{lemma}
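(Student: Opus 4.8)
The plan is to construct $\phi$ as a limit of an iteration, where at each step we ``push'' the boundary of the current special domain a little further along the two slits $I^\pm$ by composing with an explicit $\Gamma$-equivariant conformal map supported near those slits, while keeping the picture $\Gamma$-symmetric throughout. The cleanest route is a Riemann-mapping argument combined with Carath\'eodory--Kellogg boundary regularity. Concretely, one first thickens $I$ slightly: choose a $\Gamma$-invariant open set $W$ with $I\subset W\Subset V$ whose boundary meets $bD$ transversally only near $\pm a$, and let $\Omega = D\cup W$ after a small smoothing of the corners, arranged to be a special domain with base $(-b,b)$ and with $\Omega\setminus V = D\setminus V$. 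Then let $\psi\colon D\to\Omega$ be the $\Gamma$-equivariant biholomorphism provided by Lemma \ref{lem:conformalspecial} (composed with the unique disc automorphism fixing $\pm a$), normalised by \eqref{eq:normalize}; by Carath\'eodory--Kellogg it extends smoothly to $\overline D$, and $\Gamma$-equivariance forces $\psi(\pm a)=\pm b$. This $\psi$ satisfies (a) and (b) but not (c): on $\overline D\setminus V$ it need not be close to the identity.

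To fix (c) one interposes a rescaling/localisation argument. The point is that all the ``stretching'' happens inside $W\Subset V$, so on $\overline D\setminus V$ the map $\psi$ is conformally close to a translation-free perturbation; quantitatively, I would instead build $\Omega$ by only a \emph{tiny} outward bump near $I$ of normal size $\delta$, apply $\psi_\delta\colon D\to\Omega_\delta$, and observe that $\psi_\delta\to\Id$ in $\Cscr^k(\overline D\setminus V)$ as $\delta\to 0$ by standard Riemann-mapping stability (Carath\'eodory kernel convergence, upgraded to $\Cscr^k$ on compact subsets of $\overline D\setminus \overline W$ by Kellogg-type estimates, since the boundary data is unchanged there). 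But a single small bump only moves the base endpoint from $a$ to $a+\delta$, not all the way to $b$; so one iterates. Writing $a=a_0<a_1<\dots<a_m=b$ with $a_{i+1}-a_i$ small, at the $i$-th step apply the above to the current special domain $D_i$ (with base $(-a_i,a_i)$) to get $D_{i+1}$ with base $(-a_{i+1},a_{i+1})$, a $\Gamma$-equivariant biholomorphism $\psi_i\colon D_i\to D_{i+1}$ with $\psi_i=\Id$ off a small $\Gamma$-invariant neighborhood $W_i$ of $I$, and $\|\psi_i-\Id\|_{\Cscr^k(\overline D_i\setminus W_i)}<\epsilon_i$. Composing, $\phi:=\psi_{m-1}\circ\cdots\circ\psi_0\colon D\to D_m=D'$ is $\Gamma$-equivariant, satisfies $D\subset D'\subset D\cup V$ and $D'\setminus V=D\setminus V$, fixes $0$ with positive derivative, and sends $\pm a\mapsto\pm b$. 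Choosing $\sum_i\epsilon_i$ (together with the chain-rule amplification of constants across the finitely many compositions) less than $\epsilon$ gives (c).

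A couple of technical points need care. The localisation ``$\psi_i=\Id$ outside $W_i$'' is not literally achievable for a conformal map — a conformal map that is the identity on an open set is globally the identity. What is true, and what I would actually prove, is the quantitative statement: for every neighborhood $V'$ of $I$ and every $\epsilon'>0$, if the outward perturbation is supported in $W_i\Subset V'$ and is small enough, then $\|\psi_i-\Id\|_{\Cscr^k(\overline{D_i}\setminus V')}<\epsilon'$; one then runs the iteration with a fixed $V'$ satisfying $I\subset V'\Subset V$. One must also check that the intermediate domains $D_i$ can be kept special (smooth, simply connected, $\Gamma$-invariant) with base exactly $(-a_i,a_i)$ and with the boundary tangent to the vertical lines $x=\pm a_i$ at $\pm a_i$ — this is a routine corner-smoothing construction done $\Gamma$-equivariantly, and the tangency is automatic from $\Gamma$-invariance (the boundary is symmetric under $\tau_y$, hence orthogonal to $\R$, i.e. vertical, at a real boundary point). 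The main obstacle is the $\Cscr^k$-stability estimate for the Riemann maps under the small boundary perturbation, away from the perturbed part of the boundary: $\Cscr^0$ (kernel) convergence is classical, but promoting it to $\Cscr^k(\overline{D_i}\setminus V')$ requires Kellogg-type boundary Schauder estimates applied to the conformal map on the part of the boundary that is held fixed, using that the two domains agree there as $\Cscr^\infty$ domains. This is where I expect to spend the real effort; everything else is bookkeeping with the $\Gamma$-symmetry and the telescoping of the finitely many composition steps.
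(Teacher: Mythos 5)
Your approach is correct in spirit and identifies the right difficulty, but it differs from the paper in structure, and it leaves the crucial technical step unproved rather than merely ``bookkeeping.''

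The paper does \emph{not} iterate.  It performs a single step with the key decomposition $\phi=\vartheta_n\circ\psi_n$.  Here $\vartheta_n$ is a $\Gamma$-equivariant polynomial (built via Mergelyan, approximating a smooth diffeomorphism that is the identity on $(1+\tfrac{1}{2n})\cd$ and stretches $[1,1+1/n]$ onto $[1,b]$); it does the entire ``push to $b$'' at once but is arbitrarily close to the identity off a thin neighborhood of the slits.  The companion map $\psi_n\colon\D\to\Omega_n$ is the normalized Riemann map onto $\Omega_n=\vartheta_n^{-1}(\D)\cup R_n$, a domain that is a \emph{tiny} perturbation of $\D$ as $n\to\infty$; so $\psi_n\to\Id$ uniformly on $\cd$ by Rad\'o/Carath\'eodory convergence, with no Kellogg needed.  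Your iterative scheme ($a_0<\dots<a_m=b$, one small bump per step) is workable in principle but more awkward: you must track $m$ intermediate special domains, control the chain-rule amplification of constants across compositions, and verify that the composed maps keep $\overline D\setminus V$ inside the region where each factor is estimated.  None of that is needed in the one-shot version.

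The genuine gap is in your treatment of the $\Cscr^k$ estimate.  You write that promoting kernel ($\Cscr^0$) convergence to a $\Cscr^k$ estimate away from the bump ``requires Kellogg-type boundary Schauder estimates'' and that ``this is where I expect to spend the real effort'' --- but you do not carry it out, and that is precisely the heart of the lemma.  Moreover, Kellogg gives boundary regularity of a \emph{single} conformal map; a quantitative comparison of two Riemann maps onto nearby domains needs a further argument.  The paper avoids all of this by exploiting the conclusion of part (b), which you already note: once $D'\setminus V_0=\D\setminus V_0$, the boundary arc $b\D\setminus\overline V_0$ is \emph{literally unchanged}, and $\phi$ maps it into itself.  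Therefore $\phi$ extends across that arc by Schwarz reflection $\phi(w)=\tau\circ\phi\circ\tau(w)$, $\tau(z)=1/\bar z$, to a $\tau$-invariant neighborhood $E$ of the fixed arc.  The uniform ($\Cscr^0$) estimate $|\phi-\Id|<C\epsilon_0$ then holds on the larger open set $(\D\setminus\overline V_0)\cup E$, and the interior Cauchy estimates convert this into $\|\phi-\Id\|_{\Cscr^k(\cd\setminus V)}\le C'C\epsilon_0$.  This is both more elementary and self-contained than the Kellogg--Schauder route and is what you should do with the observation that ``the two domains agree there as $\Cscr^\infty$ domains'': agreement of the boundaries, not just $\Cscr^\infty$ smoothness, is what makes reflection available.
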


\begin{remark}
The main improvement over \cite[Lemma 2.1]{ForstnericWold2009}
is that the domain $D'$ agrees with $D$ outside a thin neighborhood $V$ of the arcs $I=I^+\cup I^-$
(part (b)). Also, the biholomorphic map $\phi\colon D\to \phi(D)=D'$ is $\Gamma$-equivariant 
and hence it maps the interval $\bar D\cap \R=[-a,a]$ diffeomorphically onto $\bar D'\cap \R=[-b,b]$. 
These improvements are crucial. 
\end{remark}

\begin{proof}
We shall follow the proof of \cite[Lemma 2.1]{ForstnericWold2009} with certain refinements.

By using a biholomorphism $\D\to D$ furnished by Lemma \ref{lem:conformalspecial}
(which extends to a $\Cscr^\infty$ diffeomorphism $\cd\to \overline D$), we see that
it suffices to prove the result when $D=\D$ and hence $a=1$. Choose a smaller open neighborhood
$V_0\subset \C$ of $I=I^+\cup I^-$ such that $\overline V_0\subset V$. Pick a number $\epsilon_0\in(0,\epsilon)$;
it precise value will be specified later. Fix a pair of small discs $U^+_0\Subset U^+_1$ centered at 
the point $1\in\C$, let $U^-_0\Subset U^-_1$ be the corresponding discs centered at the point 
$-1$ given by $U^-_j=\tau_x(U^+_j)$, 
and set $U_j=U^+_j\cup U^-_j$ for $j=0,1$.  (Here, $\tau_x$ and $\tau_y$ are the involutions \eqref{eq:reflections}.)  
We choose these discs small enough such that 
\begin{equation}\label{eq:UinV0}
	U_1 \subset V_0\Subset V.
\end{equation}
Decreasing the number $\epsilon_0>0$ if necessary we may assume that 
\begin{equation}\label{eq:Upm}
	\dist(U_0,\C\setminus U_1) > \frac{\epsilon_0}{2}.
\end{equation}
Fix an integer $n\in\N$ with $n>1/(b-1)$ and let 
\[
	I_n=[1,1+1/n]\cup [-1-1/n,-1].
\] 
Recall that $I=[1,b]\cup [-b,-1]$. Choose a smooth $\C$-valued map $\theta_n$ on a neighborhood of the
compact set
\begin{equation}\label{eq:Kn}
	K_n =\bigl(1+\frac{1}{2n}\bigr)\cd \cup I_n
\end{equation}
which equals the identity on a neighborhood of the closed disc $(1+\frac{1}{2n})\cd$, 
maps the interval $[1,1+\frac{1}{n}]\subset \R$ diffeomorphically onto the interval $[1,b]\subset \R$, 
and satisfies 
\[
	\tau_x \circ \theta_n\circ\tau_x=\theta_n.
\]
In particular, we have that 
\begin{equation}\label{eq:theta-n}
	\theta_n(1+1/n)=b,\quad \theta_n(-1-1/n)=-b.
\end{equation}
By Mergelyan's theorem \cite{Mergelyan1951} we can approximate $\theta_n$ as closely as desired 
in $\Cscr^1(K_n)$ by a polynomial map $\vartheta_n\colon\C\to\C$.
(Mergelyan's theorem provides uniform approximation, but we can apply his result
to the derivative of $\theta_n$ and integrate back in order to get $\Cscr^1$ approximation.)
Furthermore, we can achieve that $\vartheta_n$ satisfies the interpolation conditions \eqref{eq:theta-n}
and also $\vartheta_n(\pm 1)=\pm 1$.  Finally, replacing $\vartheta_n$ by 
$\frac{1}{4}\sum_{\gamma\in \Gamma} \gamma\circ \vartheta_n\circ\gamma$ 
we ensure that $\vartheta_n$ is $\Gamma$-equivariant.
Assuming that $\vartheta_n$ is sufficiently close to  $\theta_n$ in $\Cscr^1(K_n)$, it follows that 
$\vartheta_n$ is biholomorphic in an open neighborhood of $K_n$ \eqref{eq:Kn}. Since $\vartheta_n$ is
$\Gamma$-equivariant, it maps the interval $[1,1+1/n]$ diffeomorphically onto $[1,b]$ and maps
$[-1-1/n,-1]$ diffeomorphically onto $[-b,-1]$. Furthermore, we may assume that 
\begin{equation}\label{eq:estvartheta}
  	|\vartheta_n(z)-z|< \frac{\epsilon_0}{2} \quad \text{for all}\ \ z\in \bigl(1+\frac{1}{2n}\bigr)\cd
	\ \ \text{and}\ n>\frac{1}{b-1},
\end{equation}
and that the $\Gamma$-invariant domain 
\begin{equation}\label{eq:Thetan}
	\Theta_n:= \vartheta_n^{-1}(\D) \Subset \bigl(1+\frac{1}{4n}\bigr)\D
\end{equation}
is an arbitrarily small smooth perturbation of the disc $\D$, with $\pm 1\in b\Theta_n$.

Pick an open neighborhood $W^+_n\subset \C$ of the interval $[1,1+1/n]\subset\R$ and set
$W_n=W^+_n \cup \tau_x(W^+_n)$. By choosing $n$ big and $W^+_n$ small, we may assume that 
\begin{equation}\label{eq:Wn}
	 W_n  \subset  U_0 \quad \text{and}\quad \vartheta_n(W_n)\subset V_0.
\end{equation} 
Let 
\begin{equation}\label{eq:Omegan}
	\Omega_n=\Theta_n\cup R_n 
\end{equation}
be a special domain with the base $(-1-1/n,1+1/n)$, obtained by attaching to $\Theta_n$ 
a thin $\Gamma$-invariant strip $R_n$ around the arcs $[1,1+1/n)\cup (-1-1/n,-1]$ such that 
$R_n \subset W_n$. Together with the second inclusion in \eqref{eq:Wn} we get
\begin{equation}\label{eq:Rn}
	\vartheta_n(R_n)  \subset V_0.
\end{equation}
The (unique) biholomorphic map 
\begin{equation}\label{eq:psin}
	\psi_n\colon \D\to \Omega_n, \quad \psi_n(0)=0, \ \ \psi'_n(0)>0
\end{equation}	
is $\Gamma$-equivariant and satisfies $\psi_n(\pm 1)=\pm(1+1/n)$ by Lemma \ref{lem:conformalspecial}. 
As $n\to\infty$,  the domains $\Theta_n\subset \Omega_n$ converge to the disc $\D$
in the sense of Carath{\'e}odory (the {\em kernel convergence}, see \cite[Theorem 1.8]{Pommerenke1992}),
and their closures $\overline \Theta_n\subset \overline\Omega_n$ also converge to the closed disc $\cd$.
It follows that the sequence of conformal diffeomorphisms $\psi_n\colon \cd\to \overline \Omega_n$ 
converges to the identity map uniformly on $\cd$ by Rado's theorem 
(see Pommerenke \cite[Corollary 2.4, p.\ 22]{Pommerenke1992} 
or Goluzin \cite[Theorem 2, p.\ 59]{Goluzin1969}). In particular, we have that
\begin{equation}\label{eq:estpsi}
	|\psi_n(z)-z|< \frac{\epsilon_0}{2},\quad z\in \cd	
\end{equation}
for all big enough $n\in\N$.  We claim that, for such $n$, the $\Gamma$-invariant domain 
\begin{equation}\label{eq:Dprime}
	D'=\vartheta_n(\Omega_n) 
\end{equation}
and the  $\Gamma$-equivariant biholomorphic map
\begin{equation}\label{eq:phi}
	\phi=\vartheta_n \circ \psi_n \colon \D\to D'
\end{equation}
satisfy the conclusion of the lemma provided that the number $\epsilon_0>0$ is chosen small enough. 
Indeed, condition (a) holds by the construction. We now verify condition (b). 
Firstly, by \eqref{eq:Thetan}, \eqref{eq:Omegan},  and \eqref{eq:psin} we have that
\[
	\D=\vartheta_n(\Theta_n) \subset \vartheta_n(\Omega_n)= 
	\vartheta_n(\psi_n(\D))=\phi(\D)=D'. 
\]
Assume now that $w\in D'\setminus V_0$. By \eqref{eq:Dprime} we have
$w=\vartheta_n(\zeta)$ for some $\zeta\in \Omega_n=\Theta_n\cup R_n$. 
Since $\vartheta_n(R_n)\subset V_0$ by \eqref{eq:Rn} while $w\notin V_0$,
we have $\zeta\in \Theta_n$. By \eqref{eq:Thetan} it follows that $w=\vartheta_n(\zeta)\in\D$.
This shows that 
\begin{equation}\label{eq:D'minusV}
	D'\setminus V_0= \D\setminus V_0
\end{equation}
and hence establishes condition (b) with $V_0$ in place of $V$ (and hence also for $V$).

Finally we verify condition (c). Assume that $z\in \D\setminus U_1$.   
Conditions \eqref{eq:Upm}  and \eqref{eq:estpsi} imply $\psi_n(z)\in \Omega_n\setminus U_0$. 
Since $\Omega_n=\Theta_n\cup R_n$ by \eqref{eq:Omegan} and $R_n \subset W_n\subset U_0$
by \eqref{eq:Wn}, we infer in view of \eqref{eq:Thetan} that 
$\Omega_n\setminus U_0 \subset \Theta_n \subset \bigl(1+\frac{1}{4n}\bigr)\D$
and hence
\[
	\psi_n(z)\in \bigl(1+\frac{1}{4n}\bigr)\cd \quad \text{for all}\ \ z\in \cd\setminus U_1.
\]
By  \eqref{eq:estvartheta}, \eqref{eq:estpsi}, and \eqref{eq:phi} we conclude that
\begin{equation}\label{eq:estphi}
	|\phi(z)-z| \le |\vartheta_n(\psi_n(z)) - \psi_n(z)| + |\psi_n(z)-z| < \frac{\epsilon_0}{2} + \frac{\epsilon_0}{2} =\epsilon_0,
	\quad z\in \cd\setminus U_1. 
\end{equation}
Since $V_0\subset V$ and $\D \setminus V_0 \subset  \D\setminus U_1$ by \eqref{eq:UinV0}, 
this establishes condition (c) for $k=0$.

To complete the proof, we will show that the $\Cscr^k$ estimates of $\phi-\Id$ on 
$\cd\setminus V$ follow from the uniform estimate \eqref{eq:estphi} on $\D\setminus V_0$
in view of the Cauchy estimates and the reflection principle. 
Pick a compact arc $J\subset b\D\setminus \overline V_0$ such that $J\setminus V$ lies in the relative interior of $J$. 
Choose an open neighborhood $E \subset \C$ of $J$ which is invariant with respect to the antiholomorphic 
reflection $\tau(z)=1/\bar z$ around the circle $b\D$ and such that $\overline E\cap \overline{V}_0 =\varnothing$. 
By decreasing $\epsilon_0>0$ if necessary we may assume that $\epsilon_0<\dist(\overline{E},\overline V_0)$. 
It follows from \eqref{eq:estphi} that for every $z\in E\cap \cd$ we have $|\phi(z)-z|<\epsilon_0$ and 
hence $\phi(z)\in \cd\setminus V_0$. In view of \eqref{eq:D'minusV}  it follows in particular that
\begin{equation}\label{eq:phiE}
	\phi(E\cap b\D) \subset b\D\setminus V_0. 
\end{equation}
We extend $\phi$ to $\cd\,\cup E$ by setting 
\[
	\phi(w)=\tau\circ\phi\circ\tau(w),\quad w\in E \setminus \D.
\]
Since $\tau$ fixes the circle $bD$ pointwise, the extended map agrees with $\phi$ on $E\cap b\D$ 
in view of \eqref{eq:phiE}. Since $\tau(w)\in E\cap \cd$ and hence $\phi\circ\tau(w)\in \cd\setminus V_0$ 
by what was said above, the extended map $\phi\colon \cd\, \cup E\to\C$ satisfies the 
estimate $|\phi(z)-z|<C\epsilon_0$ for all $z\in (\cd\setminus V_0) \cup E$, where
the constant $C>1$ depends only on the distortion caused by the reflection $\tau$ on $E$.
Since the compact set $\cd\setminus V$ is contained in the open set $\Omega=(\D\setminus \overline V_0) \cup E $, 
the Cauchy estimates give $\|\phi-\Id\|_{\Cscr^k(\cd\setminus V)} \le C' \|\phi-\Id\|_{\Cscr^0(\Omega)}<C'C\epsilon_0$
for some constant $C'$ depending only on $k$ and $\dist(\cd\setminus V,\C\setminus\Omega)$. 
Choosing $\epsilon_0>0$ small enough, this is $<\epsilon$. The proof is complete.
\end{proof}

An obvious adaptation of the above proof gives the following lemma in which the exposing occurs at a single boundary point.
As mentioned in the introduction, the use of this lemma in the proof of Theorem \ref{th:main}
yields a proper holomorphic embedding $F\colon \D\hra \B^n$ which extends to a holomorphic embedding
of a neighborhood $U\subset\C$ of $\cd\setminus \{1\}$ such that the curve 
$F(b\D\setminus \{1\}) \subset b\B^n$ is dense in $b\B^n$.

%
%
\begin{lemma}\label{lem:special2} 
Let $b>1$. Given an open neighborhood $V\subset \C$ of the interval $I=[1,b]\subset\R$ and 
numbers $\epsilon>0$ and $k\in\Z_+$, there exists a biholomorphism $\phi\colon \D\to \phi(\D)=D'$ onto a 
smoothly bounded domain $D'$ satisfying the following conditions:
\begin{itemize}
\item[\rm (a)] $\phi(0)=0$, $\phi'(0)>0$, $\phi(1)=b$, $\phi(x-\imath y)=\overline{\phi(x+\imath y)}$, 
\vspace{1mm}
\item[\rm (b)] $\D\subset D'\subset \D\cup V$, and 
\vspace{1mm}
\item[\rm (c)] $\|\phi-\Id\|_{\Cscr^k(\overline D\setminus V)}<\epsilon$. 
\end{itemize}
\end{lemma}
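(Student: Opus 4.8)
The plan is to reproduce the proof of Lemma \ref{lem:special} with only cosmetic changes, replacing the group $\Gamma\cong\Z_2^2$ by its subgroup $\langle\tau_y\rangle\cong\Z_2$ generated by complex conjugation $\tau_y(z)=\bar z$, and replacing every construction carried out symmetrically at the two points $\pm1$ by the corresponding construction at the single point $1$. Since we no longer require $D'$ to be a special domain — only smoothly bounded and conjugation-invariant — the place of Lemma \ref{lem:conformalspecial} is taken by the elementary remark that a Riemann map onto a conjugation-invariant simply connected domain, normalized by $\phi(0)=0$ and $\phi'(0)>0$, is automatically conjugation-equivariant: $z\mapsto\overline{\phi(\bar z)}$ is another Riemann map with the same normalization, hence equals $\phi$ by uniqueness.

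Concretely, I would fix a smaller neighborhood $V_0$ of $I=[1,b]$ with $\overline V_0\subset V$, discs $U_0\Subset U_1$ centered at $1$ with $U_1\subset V_0$ and $\dist(U_0,\C\setminus U_1)>\epsilon_0/2$ for a small $\epsilon_0\in(0,\epsilon)$, and an integer $n>1/(b-1)$. As in the proof of Lemma \ref{lem:special}, choose a conjugation-symmetric smooth map $\theta_n$ on a neighborhood of $K_n=(1+\tfrac{1}{2n})\cd\cup[1,1+\tfrac1n]$ equal to the identity near the disc and mapping $[1,1+\tfrac1n]$ diffeomorphically onto $[1,b]$; approximate it in $\Cscr^1(K_n)$ by a polynomial $\vartheta_n$ with $\vartheta_n(1)=1$ and $\vartheta_n(1+\tfrac1n)=b$, then symmetrize $\vartheta_n$ (replace it by $\tfrac12(\vartheta_n(z)+\overline{\vartheta_n(\bar z)})$) to make it conjugation-equivariant. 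For $n$ large and the approximation close, $\vartheta_n$ is biholomorphic near $K_n$, the domain $\Theta_n:=\vartheta_n^{-1}(\D)\Subset(1+\tfrac{1}{4n})\D$ is a small smooth perturbation of the disc with $1\in b\Theta_n$, and $|\vartheta_n(z)-z|<\epsilon_0/2$ on $(1+\tfrac{1}{2n})\cd$. Next pick a neighborhood $W_n\subset U_0$ of $[1,1+\tfrac1n]$ with $\vartheta_n(W_n)\subset V_0$, and form the smoothly bounded simply connected conjugation-invariant domain $\Omega_n=\Theta_n\cup R_n$ by attaching a thin conjugation-symmetric strip $R_n\subset W_n$ around $[1,1+\tfrac1n)$, so that $1\in b\Omega_n$ and $\vartheta_n(R_n)\subset V_0$. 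The normalized Riemann map $\psi_n\colon\D\to\Omega_n$ is conjugation-equivariant with $\psi_n(1)=1+\tfrac1n$, and by Carath{\'e}odory kernel convergence together with Rad\'o's theorem $\psi_n\to\Id$ uniformly on $\cd$, so $|\psi_n(z)-z|<\epsilon_0/2$ on $\cd$ for $n$ large. Then $D'=\vartheta_n(\Omega_n)$ and $\phi=\vartheta_n\circ\psi_n$ do the job: (a) holds by construction (with $\phi'(0)=\vartheta_n'(0)\psi_n'(0)>0$ since $\vartheta_n\approx\Id$ near $0$); for (b), $\D=\vartheta_n(\Theta_n)\subset D'$, while $w\in D'\setminus V_0$ forces $w=\vartheta_n(\zeta)$ with $\zeta\in\Theta_n$ (as $\vartheta_n(R_n)\subset V_0$), hence $w\in\D$, giving $D'\setminus V_0=\D\setminus V_0$ and $\D\subset D'\subset\D\cup V$; and for (c), the separation $\dist(U_0,\C\setminus U_1)>\epsilon_0/2$ together with $|\psi_n(z)-z|<\epsilon_0/2$ forces $\psi_n(z)\in\Omega_n\setminus U_0\subset\Theta_n\subset(1+\tfrac{1}{4n})\cd$ for $z\in\cd\setminus U_1\supset\D\setminus V_0$, so $|\phi(z)-z|<\epsilon_0$ there.

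The only step that is not purely mechanical is upgrading the uniform estimate on $\D\setminus V_0$ to the $\Cscr^k$ estimate on $\cd\setminus V$, and this is handled verbatim as in the proof of Lemma \ref{lem:special}: one extends $\phi$ holomorphically across an arc $J\subset b\D\setminus\overline V_0$ (with $J\setminus V$ in its relative interior) using the reflection $\tau(z)=1/\bar z$ — legitimate because $D'\setminus V_0=\D\setminus V_0$ shows $\phi$ maps $J\cap b\D$ into $b\D\setminus V_0$ — and then applies the Cauchy estimates on a fixed neighborhood of $\cd\setminus V$; shrinking $\epsilon_0$ at the end makes the resulting bound smaller than $\epsilon$. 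I do not expect any genuine obstacle: passing from $\Gamma$ to $\langle\tau_y\rangle$ only means $\psi_n$ is unconstrained at $-1$ and $D'$ need not be special, which is irrelevant since the statement records only $\phi'(0)>0$, the conjugation symmetry $\phi(\bar z)=\overline{\phi(z)}$, and the single exposed point $1$.
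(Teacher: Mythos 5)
Your proposal is essentially correct and is exactly what the paper intends: the paper itself gives no separate proof of Lemma~\ref{lem:special2}, saying only that it follows by ``an obvious adaptation of the above proof,'' and your adaptation—replacing $\Gamma=\langle\tau_x,\tau_y\rangle$ by $\langle\tau_y\rangle$, performing the exposing at the single point $1$, and substituting the uniqueness argument for the normalized Riemann map in place of Lemma~\ref{lem:conformalspecial}—is the right one, including the reflection-plus-Cauchy-estimates step, which carries over verbatim.

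One small but genuine oversight: you impose only the interpolation conditions $\vartheta_n(1)=1$ and $\vartheta_n(1+\tfrac1n)=b$, but you also need $\vartheta_n(0)=0$. In the proof of Lemma~\ref{lem:special} this is automatic, because the symmetrization over the full group $\Gamma$ makes $\vartheta_n$ odd ($\vartheta_n(-z)=-\vartheta_n(z)$ from equivariance with respect to $\tau_x\circ\tau_y$), forcing $\vartheta_n(0)=0$. With only the conjugation symmetry $\vartheta_n(\bar z)=\overline{\vartheta_n(z)}$ you merely get $\vartheta_n(0)\in\R$; since $\psi_n(0)=0$, your $\phi(0)=\vartheta_n(\psi_n(0))=\vartheta_n(0)$ would otherwise be a small nonzero real number, violating condition (a). The fix is to add $\vartheta_n(0)=0$ as a third interpolation condition in the Mergelyan step (interpolation at finitely many points is standard and compatible with the symmetrization); everything else in your argument then goes through, including $\phi'(0)=\vartheta_n'(0)\psi_n'(0)>0$ since $\vartheta_n'(0)$ is real (by conjugation-equivariance) and close to $1$.
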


Note that $\phi$ extends to a smooth diffeomorphism $\phi\colon\cd\to\bar D'$.
Condition (a) implies that $D'\supset \D\cup [1,b)$ and that $\phi$ maps the interval
$[-1,1]\subset \R$ diffeomorphically onto $[-1,b]$.


\section{Proof of Theorem \ref{th:main}} 
\label{sec:proof}

For simplicity of notation we focus on the case $n=2$
which is of main interest, although the proof holds for any $n\ge 2$.  
 
Let $z=x+\imath y$ denote the coordinate on $\C$. Let $\Gamma$ denote the group \eqref{eq:Gamma}.
Given a positive continuous even function $g>0$ on $\R$, we let $S_g\subset \C$ denote the $\Gamma$-invariant strip
\begin{equation}\label{eq:Sg}
	S_g = \{x+\imath y:  x\in\R,\ \ |y|<g(x)\}.
\end{equation}
Let $f=(f_1,f_2)\colon \R\hra b\B^2$ be a real analytic complex tangential embedding with dense image,
furnished by Corollary \ref{cor:dense}. By complexification, $f$ extends to a holomorphic
immersion $f: \overline S_{g_0} \hra\C^2$ for some $g_0$ as above. 
Fix $g_0$ and write $S=S_{g_0}$. Since the function $|f|^2= |f_1|^2 + |f_2|^2$ is strongly subharmonic on $S$ and
constantly equal to $1$ on $\R$, 
we have $|f(x+\imath y)|\ge 1+c(x)|y|^2$ for a positive smooth function $c\colon \R\to (0,\infty)$
(see e.g.\ \cite{BurnsStout1976} for the details). Hence, if  the strip $S$ is chosen thin enough then 
\begin{equation}\label{eq:norm}
	1\le |f|^2= |f_1|^2 + |f_2|^2 <2 \ \ \text{on} \ \ \overline S,
\end{equation}
and $|f|=1$ holds precisely on $\R$. This means that the immersed complex curve $f(\overline S)\subset \C^2$ 
touches the sphere $b\B^2$ tangentially along $f(\R)$ and satisfies 
\begin{equation}\label{eq:outofball}
	f(\overline S\setminus \R)\subset \sqrt{2}\B^2\setminus \overline\B^2.
\end{equation}

%
%
\begin{lemma}\label{lem:initial-embedding}
Given $\epsilon>0$, there is a strip $S_g$ of the form \eqref{eq:Sg}, 
with $0<g(x)<g_0(x)$ for all $x\in\R$, such that $f\colon \overline S_g \to\C^2$ is an injective immersion and
\[
	\Area(f(S_g)) = \int_{S_g} |f'|^2 dx dy <\epsilon.
\]
\end{lemma}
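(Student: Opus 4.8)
The plan is as follows. Since $f$ extends to a holomorphic immersion on $\overline{S_{g_0}}$ (which I also denote by $f$), the restriction $f|_{\overline{S_g}}$ is automatically an immersion for every even continuous $g$ with $0<g\le g_0$; so the whole task is to choose $g$ small enough to obtain injectivity together with the area estimate. The area estimate is the easy part: writing $M(x)=\sup_{|y|\le g_0(x)}|f'(x+\imath y)|^2$, a positive continuous function of $x$, one has
\[
	\Area(f(S_g))=\int_{S_g}|f'|^2\,dx\,dy\ \le\ \int_\R 2g(x)M(x)\,dx ,
\]
so it will be enough to require $g(x)\le \epsilon\bigl(8(1+M(x))(1+x^2)\bigr)^{-1}$ for all $x$. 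The real issue is injectivity: since $f(\R)$ is dense in $b\B^2$ it is a priori conceivable that the complexified curve $f(\overline{S_g})$ self‑intersects no matter how thin the strip, near the places where $f(\R)$ returns close to itself. I plan to rule this out, and \eqref{eq:norm} is what makes it work.

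First I would record the structural fact that $f$ never identifies a point of $\R$ with a different point: if $f(z)=f(w)$ with $z,w\in\overline{S_{g_0}}$ and $z\in\R$, then $|f(w)|=|f(z)|=1$, hence $w\in\R$ by \eqref{eq:norm}, and then $z=w$ since $f|_\R$ is an embedding. Consequently the coincidence set
\[
	C=\{(z,w)\in\overline{S_{g_0}}\times\overline{S_{g_0}}: z\ne w,\ f(z)=f(w)\}
\]
is contained in $(\overline{S_{g_0}}\setminus\R)\times(\overline{S_{g_0}}\setminus\R)$. For $R>0$ set
\[
	\nu(R)=\inf\{|\Im w|:(z,w)\in C,\ |\Re z|\le|\Re w|\le R\}
\]
(with $\inf\varnothing=+\infty$). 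Note $C$ is symmetric in its two arguments, and $\nu$ is non‑increasing in $R$.

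The heart of the proof will be the claim that $\nu(R)>0$ for every $R$; this is the step I expect to be the main obstacle. I would argue by contradiction. A minimizing sequence $(z_j,w_j)\in C$ has all real parts in $[-R,R]$ and $|\Im z_j|\le g_0(\Re z_j)$ bounded; passing to a subsequence, $z_j\to z_*$ and $w_j\to w_*$ in $\overline{S_{g_0}}$, with $w_*\in\R$ because $\Im w_j\to 0$. Then $|f(z_*)|=|f(w_*)|=1$ forces $z_*\in\R$ by \eqref{eq:norm}, whence $z_*=w_*$ because $f|_\R$ is injective; but $f$ is an immersion at the interior point $z_*=w_*\in\R$, hence injective on a disc about it, contradicting $z_j\ne w_j$ for large $j$. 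The crucial point here is that the constraint $|\Re z|\le|\Re w|$ keeps the real parts of \emph{both} sequences bounded, so no subsequence can escape to infinity along the (pinching) strip; it is exactly here that the strict inequality $|f|>1$ off $\R$ together with injectivity of $f|_\R$ is used.

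Granting $\nu(R)>0$, I can choose an even, continuous, positive function $\theta$ on $\R$ with $\theta(x)\le\nu(|x|)$ for all $x$ (possible since $\nu$ is non‑increasing and positive), and set
\[
	g(x)=\tfrac12\min\{\,g_0(x),\ \theta(x),\ \epsilon\bigl(8(1+M(x))(1+x^2)\bigr)^{-1}\,\}.
\]
Then $0<g<g_0$, $f|_{\overline{S_g}}$ is an immersion, and $\Area(f(S_g))<\epsilon$ by the first paragraph. For injectivity: if $f(z)=f(w)$ with $z\ne w$ and $z,w\in\overline{S_g}$, then $(z,w)\in C$; labelling the two points so that $|\Re z|\le|\Re w|=:R$, we get $|\Im w|\ge\nu(R)\ge\theta(\Re w)\ge 2g(\Re w)>g(\Re w)$, so $w\notin\overline{S_g}$, a contradiction. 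Hence $f|_{\overline{S_g}}$ is an injective immersion with $\Area(f(S_g))<\epsilon$, which is the assertion of the lemma.
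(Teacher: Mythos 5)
Your argument is correct in substance, and it takes a genuinely different route from the paper's. The paper proceeds inductively: it chooses half-heights $b_j>0$ for rectangles $P_j$ over $[j-1,j]\times[-b_j,b_j]$ one pair at a time, using at each step that an immersion which is injective on a compact set is injective on a neighborhood of that set, and finally picks $g$ under the resulting step function. You instead extract a uniform ``safety margin'' $\nu(R)$ for the coincidence set $C$ in one stroke and prove $\nu(R)>0$ by a single compactness argument, then fit $g$ below $\nu$. Both proofs rest on the same two mechanisms --- $|f|=1$ precisely on $\R$ (so equality $f(z)=f(w)$ with one point on $\R$ forces $z=w$), and local injectivity of the immersion $f$ near interior points of $\R$ --- but the packaging differs: your version isolates quantitatively why near-diagonal coincidences are impossible over compact sets of real parts, while the paper's inductive version is somewhat more constructive and avoids having to discuss monotonicity and lower semicontinuity of an infimum. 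The ordering trick $|\Re z|\le|\Re w|$ in the definition of $\nu$, which you correctly emphasize, is exactly what prevents one member of a minimizing pair from escaping to infinity along the pinching strip, and this is where your argument would otherwise break.

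One small oversight: the definition \eqref{eq:Sg} requires $g$ to be an \emph{even} function (so that $S_g$ is $\Gamma$-invariant, which is used in the rest of Section~5). You chose $\theta$ to be even, but $M(x)=\sup_{|y|\le g_0(x)}|f'(x+\imath y)|^2$ has no reason to be even in $x$, so the $g$ you define need not be even. The fix is trivial --- replace $M(x)$ by $\max\{M(x),M(-x)\}$ in the definition of $g$, or symmetrize $g$ afterward via $x\mapsto\min\{g(x),g(-x)\}$ --- but as written the hypothesis on $g$ in \eqref{eq:Sg} is not verified.
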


\begin{proof}
Consider a double sequence $b_j>0$ $(j\in\Z)$ such that 
\[
	0< b_j <\min\{g_0(x):  j-1 \le x\le j\}, \quad j\in \Z.
\]
It follows that the rectangle 
\[
	P_j = \{z=x+\imath y \in \C : j-1 \le x\le j, \ |y|<b_j\} \Subset S
\]
is compactly contained in $S$. We claim that the sequence
$b_j$ can be chosen such that $f$ is injective on the union $\bigcup_{j\in\Z} P_j$.
Indeed, assume that the numbers $b_j$ for $j=0,\pm 1,\ldots, \pm k$ 
have already been chosen such that $f$ is an injective immersion on the set $Q_k=\bigcup_{|j|\le k} P_j$.
In view of \eqref{eq:outofball} if follows that $f$ is an injective immersion on $Q_k\cup \R$;
hence it is an injective immersion in an open neighborhood of the compact set $Q_k\cup [-k-1,k+1]$.
Therefore we can choose the constants $b_{k+1}>0$ and $b_{-k-1}>0$ small enough such that
$f$ is also an injective immersion on $Q_{k+1}$, and hence the induction may proceed.
Finally, choosing a positive even continuous function $g>0$ on $\R$ satisfying 
\[
	\max\{g(x) :  j-1 \le x\le j\} < b_j, \quad j\in \Z,
\]
it follows that $S_g\subset \bigcup_{j\in\Z} P_j= \bigcup_{k\in \N} Q_k$ and hence
$f$ is injective on $S_g$. By decreasing $g$ if necessary we can clearly achieve that 
the area of the disc $f(S_g)$ is as small as desired.
\end{proof}

Replacing the function $g_0$ by $g$ and the initial strip $S=S_{g_0}$ by the strip 
$S_g$ furnished by Lemma \ref{lem:initial-embedding}, we shall assume that 
\begin{equation}\label{eq:2emb}
	\text{$f\colon S\hra \C^2$ is an injective holomorphic immersion.}
\end{equation}

Recall that $\sigma\colon \C^2_*\to\CP^1$ denotes the canonical projection onto the Riemann sphere.
At each point $z=(z_1,z_2)\in b\B^2$ the complex line $\xi_z\subset T_z\C^2$ 
tangent to $b\B^2$ is transverse to the  line $\C z=\sigma^{-1}(\sigma(z))\cup\{0\}$, and hence 
$d\sigma_z \colon \xi_z\to T_{\sigma(z)}\CP^1$ is an isomorphism. Since the immersed curve 
$f\colon \R\to b\B^2$ satisfies $\dot f(t)\in \xi_{f(t)}$ for every $t\in \R$,
it follows that $\sigma \circ f\colon \R\to \CP^1$ is an immersion. 
Hence, if the strip $S$ is chosen thin enough then  
\begin{equation}\label{eq:immersion}
	\sigma \circ f : S\to \CP^1\ \ \text{is an immersion}.
\end{equation}
We shall frequently use the following observation.

\begin{lemma}\label{lem:embed}
Let $D$ be a relatively compact domain in $\C$ and $F\colon \overline D \to \C^2_*$
be an injective immersion of class $\Ascr^1(D)$ such that $\sigma\circ F\colon \overline D\to \CP^1$ is an immersion.
If $h\in H^\infty(D)$ is sufficiently small in the sup-norm, then $e^{-h}F \colon D\to\C^2_*$ is an injective immersion.
\end{lemma}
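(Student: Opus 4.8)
The plan is to deduce Lemma~\ref{lem:embed} from the compactness of $\overline D$ together with the two hypotheses: injectivity of $F$ and the immersion property of $\sigma\circ F$. The key point is that multiplying by $e^{-h}$ preserves the fibres of $\sigma$, so $\sigma\circ(e^{-h}F)=\sigma\circ F$ is automatically an immersion; hence $e^{-h}F$ is an immersion whatever $h$ is. The only thing that can fail for small $h$ is injectivity, and I would attack that by a standard compactness-of-the-diagonal-complement argument.

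First I would set up the ``double arc space'' $\overline D\times\overline D\setminus\Delta$, where $\Delta$ is the diagonal, and split it according to how close the two points are. Away from the diagonal: the set $\{(z,w)\in\overline D\times\overline D:|z-w|\ge\delta\}$ is compact, and on it the continuous function $(z,w)\mapsto|F(z)-F(w)|$ is bounded below by some $c_\delta>0$ since $F$ is injective. A uniform bound $\|h\|_\infty<\epsilon$ keeps $|e^{-h(z)}-e^{-h(w)}|$ and $|e^{-h}|$ under control, so $|e^{-h(z)}F(z)-e^{-h(w)}F(w)|\ge|e^{-h(w)}||F(z)-F(w)|-|e^{-h(z)}-e^{-h(w)}||F(z)|$ stays positive once $\epsilon$ is small enough depending on $\delta$. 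Near the diagonal I would instead use that $F$ is an immersion of class $\Ascr^1(D)$: on a compact neighborhood the differential $dF$ has a positive lower bound, so there is $\delta>0$ and $c>0$ with $|F(z)-F(w)|\ge c|z-w|$ whenever $|z-w|<\delta$ (for $z,w$ in a slightly shrunk domain; near $bD$ one uses the $\Cscr^1$ extension to a neighborhood or a standard covering argument). Combined with a Lipschitz bound $|e^{-h(z)}-e^{-h(w)}|\le C\|h\|_{\Cscr^1}|z-w|$ — here I should be a bit careful and either ask for smallness in a $\Cscr^1$ norm on a slightly larger domain, or exploit that $h$ is holomorphic so Cauchy estimates give $\|h'\|_{\infty,D'}\lesssim\|h\|_{\infty,D}$ on interior subdomains, handling the boundary by the same reflection/covering trick used elsewhere in the paper — the near-diagonal contribution is again dominated.

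The main obstacle I anticipate is the interplay between the two regimes at the boundary $bD$: the hypothesis only gives $h\in H^\infty(D)$ (a sup bound on the open domain), yet I want a Lipschitz-type estimate near the diagonal, which naively needs control of $h'$ up to the boundary. The clean fix is that for holomorphic $h$ a sup-norm bound on $D$ does control $h'$ on every compact subset of $D$ via Cauchy estimates, and the boundary strip of $\overline D$ can be treated by noting that $F$ extends to an immersion on a neighborhood of $\overline D$ (being $\Ascr^1$, this is the standard convention here, exactly as in Lemma~\ref{lem:position}), so one may shrink $\overline D$ slightly and absorb the boundary into the interior estimate, or alternatively phrase the whole argument with $h\in \Ascr^1(D)$ small in $\Cscr^1$, which is all that is ever used in applications. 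Granting this, choosing $\epsilon>0$ smaller than the minimum of the finitely many thresholds coming from the far-diagonal and near-diagonal estimates gives that $e^{-h}F$ is injective, and since it is also an immersion, it is an injective immersion, completing the proof.
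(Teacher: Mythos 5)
Your split into far-diagonal and near-diagonal regimes is not how the paper proceeds, and the near-diagonal half of your argument has a genuine gap that the paper's route avoids entirely. You correctly note that $\sigma\circ(e^{-h}F)=\sigma\circ F$, but you only exploit this to see that $e^{-h}F$ remains an immersion. The same identity does much more: a collision $e^{-h(z)}F(z)=e^{-h(w)}F(w)$ with $z\ne w$ forces $\sigma\circ F(z)=\sigma\circ F(w)$, so injectivity of $e^{-h}F$ can only fail on the set
\[
\Delta'=\bigl\{(z,w)\in\overline D\times\overline D:\ z\ne w,\ \ \sigma\circ F(z)=\sigma\circ F(w)\bigr\},
\]
and since $\sigma\circ F$ is an immersion on the compact set $\overline D$ it is a local embedding, so $\Delta'$ is \emph{compact} (bounded away from the diagonal). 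The near-diagonal case therefore requires \emph{no estimate on $h$ whatsoever}; the only estimate needed is the far-from-diagonal one, which you already carry out correctly: $\delta:=\inf_{\Delta'}|F(z)-F(w)|>0$ by injectivity and compactness, $c:=\sup_{\overline D}|F|$, and then $|e^{-h(z)}F(z)-e^{-h(w)}F(w)|\ge|F(z)-F(w)|-|e^{-h(z)}-1|\,|F(z)|-|e^{-h(w)}-1|\,|F(w)|\ge\delta-2c\,\delta/(3c)>0$ once $\|h\|_\infty$ is small enough that $|e^{\zeta}-1|<\delta/(3c)$ for $|\zeta|<\|h\|_\infty$. This is exactly the paper's proof.

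As for the gap in your near-diagonal step: the Lipschitz estimate $|e^{-h(z)}-e^{-h(w)}|\lesssim\|h\|_{\Cscr^1}|z-w|$ needs $\Cscr^1$ control of $h$ up to $bD$, which a sup-norm bound on $H^\infty(D)$ does not give (consider $h(z)=\epsilon\sqrt{1-z}$ on $\D$: small sup norm, unbounded derivative at $1$). Cauchy estimates only control $h'$ on compacts in $D$, and $h$ need not extend past $\overline D$, so the various patches you propose either silently strengthen the hypothesis to $\Cscr^1$ smallness or leave a hole near $bD$. The point is that this patching is unnecessary once you observe that $\sigma\circ F$ being a local embedding already rules out near-diagonal collisions for every $h$.
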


\begin{proof}
Let $c=\sup\{|F(z)|: z\in\overline D\}>0$. Consider the set
\[
	\Delta=\bigl\{(z,w)\in \overline D\times \overline D: \sigma\circ F(z)=\sigma\circ F(w)\bigr\}
	= \Delta_0\cup \Delta', 
\] 
where $\Delta_0=\{(z,z):z\in\overline D\}$ and $\Delta'=\Delta\setminus \Delta_0$. 
Since $\sigma\circ F\colon \overline D\to\CP^1$ is an immersion, it is locally an embedding, 
and hence the set $\Delta'$ is compact. Since $F\colon \overline D\to \C^2_*$ is injective,  it follows that
\[
	\delta:=\inf\bigl\{|F(z)-F(w)|: (z,w)\in \Delta'\bigr\}>0.
\]
Choose $\mu>0$ such that $|e^\zeta-1|<\delta/3c$ when $|\zeta|<\mu$. Assuming that $|h(z)|<\mu$ for all $z\in D$
and taking into account that $|F|<c$ on $\overline D$, we have for every $(z,w)\in \Delta'$ that 
\begin{eqnarray*}
	\big| e^{-h(z)}F(z)-e^{-h(w)}F(w)\big| &\ge&  |F(z)-F(w)| - \\
	&&  -\bigl| e^{-h(z)}F(z)-F(z)\bigr| - \bigl|e^{-h(w)}F(w)-F(w)\bigr| \\
	&\ge& \delta - c\delta/3c  - c\delta/3c =\delta/3>0. 
\end{eqnarray*}
This shows that the map $e^{-h}F\colon D\to\C^2$ is injective.
Since $\sigma\circ (e^{-h}F) =\sigma \circ F$ is an immersion by the assumption, $e^{-h}F$ is also an immersion.
\end{proof}

We shall find an embedded disc in $\B^2$ satisfying Theorem \ref{th:main} by pulling
the embedded strip $f(S)\subset \C^2_*$ slightly into the ball along the curve $f(\R) \subset b\B^2$,
where the amount of pulling decreases fast enough as we go to infinity inside the strip.
When doing so, we shall pay special attention to ensure injectivity; this is a fairly delicate task
since the curve $f(\R)=f(S)\cap b\B^2$ is dense in $b\B^2$.
To this end, we shall be considering smoothly bounded, simply connected, 
$\Gamma$-invariant domains $D\subset\C$ satisfying
\begin{equation}\label{eq:Sprime}
	\R\subset D\subset \overline {D}\subset S.
\end{equation}
Assume that $h = u+\imath v \in \Ascr^1(D)$ satisfies
\begin{equation}\label{eq:h}
	\text{$u>0$ on $\R$ \ \ and\ \  $e^{2u} < |f|^2$ on $bD$}.
\end{equation}
Consider the map $F$ of class $\Ascr^1(D)$ defined by
\begin{equation}\label{eq:F}
	 F = (F_1,F_2)= e^{-h} (f_1,f_2) : \overline {D} \to \C^2_*. 
\end{equation}
On the real axis we have that $|F|^2=e^{-2u}|f|^2<|f|^2=1$, while on the boundary $bD$ we have
that $|F|^2=e^{-2u}|f|^2>1$ in view of \eqref{eq:h}. This means that
\begin{equation}\label{eq:whatFdoes}
	F(\R)\subset \B^2\quad \text{and}\quad  F(bD)\cap \overline \B^2 =\varnothing.
\end{equation}
Assume in addition that $F$ is transverse to the sphere $b\B^2$. Let 
\begin{equation}\label{eq:Omega}
	\Omega \subset \{z\in D : F(z)\in \B^2\}  
\end{equation}
denote the connected component of the set on the right hand side containing $\R$. It follows that 
$\overline \Omega \subset D$ and $F|_\Omega \colon \Omega\to \B^2$ is a proper holomorphic map
extending holomorphically to $\overline \Omega$ and mapping $b\Omega$ to the sphere $b\B^2$.
Clearly, $\Omega$ is Runge in $\C$ and hence conformally equivalent to the disc;
indeed, there is a biholomorphism $\D\to\Omega$ extending holomorphically
to $\overline \D\setminus \{\pm 1\}$.  The proof of Theorem \ref{th:main} is 
concluded by the following lemma.

%
%
\begin{lemma}\label{lem:main}
Given $\epsilon>0$, there exist a smoothly bounded, simply connected, 
$\Gamma$-invariant domain $D\subset\C$ satisfying \eqref{eq:Sprime}
and a function $h=u+\imath v\in \Ascr^1(D)$ satisfying \eqref{eq:h} such that the 
map $F=e^{-h}f\colon \overline {D} \to \C^2_*$ is an injective immersion 
transverse to $b\B^2$ satisfying
\begin{itemize} 
\item[\rm ($\alpha$)] $\Area(F(\Omega))<\epsilon$, where $\Omega$  is defined by \eqref{eq:Omega}, and
\vspace{1mm}
\item[\rm ($\beta$)]  the curve $F(b\Omega)\subset b\B^2$ is everywhere dense in the sphere $b\B^2$.
\end{itemize}
\end{lemma}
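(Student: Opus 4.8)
The plan is to build the domain $D$, the function $h$, and the embedded disc $F|_\Omega$ as limits of an inductive construction that alternates the three ingredients assembled above (Lemmas \ref{lem:position}, \ref{lem:special}, \ref{lem:embed}). Along the induction we carry: $\Gamma$-invariant, smoothly bounded, simply connected domains $D_1\subset D_2\subset\cdots$ with $\bar D_k\subset S$ whose boundaries stabilize on every compact set (so $D:=\bigcup_k D_k$ is again of this kind and, with $a_k\nearrow+\infty$ chosen below, satisfies $\R\subset D\subset\bar D\subset S$); holomorphic functions $h_k=u_k+\imath v_k$ near $\bar D_k$ with $h_k(\bar z)=\overline{h_k(z)}$, $u_k>0$ on $(-a_k,a_k)$ and $e^{2u_k}<|f|^2$ on $bD_k$; and the maps $F_k=e^{-h_k}f$. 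We maintain that $F_k$ is an injective immersion transverse to $b\B^2$; that the component $\Omega_k$ of $\{z\in D_k:|F_k(z)|<1\}$ containing $(-a_k,a_k)$ is a special domain with base $(-a_k,a_k)$ and $\bar\Omega_k\Subset D_k$, with $\Omega_k\subset\Omega_{k+1}$; that $\Area(F_k(\Omega_k))<\epsilon/2$; and that the curve $F_k(b\Omega_k)\subset b\B^2$ meets $\Lambda=f(\R)$ transversely at $p_k^\pm=F_k(\pm a_k)=f(\pm a_k)$ and runs $\delta_k$-close to $f([-a_k,a_k])$, with $\sum_k\delta_k<\infty$. A key remark is that transversality of $F_k$ to $b\B^2$ means precisely that $0$ is a regular value of $\rho_{h_k}=\log|F_k|=\log|f|-u_k$, a real-analytic function on $D_k$; hence $b\Omega_k$ is automatically a real-analytic curve, the $\Gamma$-equivariant uniformization $\D\to\Omega_k$ (with $\pm1\mapsto\pm a_k$) extends holomorphically past $b\D$, and the reparametrized disc map extends holomorphically to a neighbourhood of $\bar\D$. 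The base case $k=1$ is provided by Lemma \ref{lem:initial-embedding}: a thin special domain $\Omega_1$ together with a very small multiplier $h_1$, using \eqref{eq:norm}, \eqref{eq:immersion} and Lemma \ref{lem:embed}.

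\emph{Inductive step.} Pick $b_k>a_k$, set $E_k^\pm=f([\pm a_k,\pm b_k])\subset\Lambda$ (arcs issuing from $p_k^\pm$), and perform, with all parameters small enough:
\begin{itemize}
\item[(i)] \emph{Cleaning.} Uniformize $\Omega_k$ over $\D$ and apply Lemma \ref{lem:position} with $E$ a fixed neighbourhood of $E_k^+\cup E_k^-$ from which small neighbourhoods of $p_k^\pm$ are deleted; this replaces $h_k$ by $h_k+th$, $h\in\Hscr^+_*$, $t>0$ small, after which the (slightly enlarged) embedded disc has boundary meeting $E$ only near $p_k^\pm$. Since $\Re h|_\T=0$ near $\pm1$ nothing changes near the anchors, and the cone property of $\Hscr^+_*$ with \eqref{eq:uatpm1} preserves transversality and $|F|<1$ on the base.
\item[(ii)] \emph{Stretching.} Using the holomorphic extension of the disc map past $b\D$ (available by the real-analyticity remark, since moreover $F_k=e^{-h_k}f$ is holomorphic on all of $D_k\supset[a_k,b_k]$), apply the exposing Lemma \ref{lem:special} to push, $\Gamma$-equivariantly, the two boundary points corresponding to $p_k^\pm$ out along $E_k^+\cup E_k^-$ to $q_k^\pm=f(\pm b_k)$, keeping the stretched-out part of the disc in an arbitrarily thin tube around $E_k^+\cup E_k^-$; equivalently, pass to a special domain with base essentially $(-b_k,b_k)$ agreeing with $\Omega_k$ outside a thin tube $V_k$ around $[a_k,b_k]\cup[-b_k,-a_k]$ with $f(V_k)\subset S$.
\item[(iii)] \emph{Pulling in.} Multiply by a further factor $e^{-g_k}$, where $g_k$ is holomorphic near the new domain, $g_k(\bar z)=\overline{g_k(z)}$, with $\Re g_k$ positive but arbitrarily small on $[a_k,b_k]\cup[-b_k,-a_k]$ and uniformly small and concentrated near those segments (constructed from prescribed boundary values by a Poisson/conjugate-function integral as in \eqref{eq:Poisson}), so that the stretched-out part is pushed strictly inside $\B^2$. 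Then $F_{k+1}=e^{-g_k}(\,\cdot\,)=e^{-h_{k+1}}f$ has its inside-ball component $\Omega_{k+1}$ a special domain with base essentially $(-b_k,b_k)$ containing $\Omega_k$, the added part mapping into a thin tube around $E_k^+\cup E_k^-$ inside $\B^2$, and $F_{k+1}(b\Omega_{k+1})$ running $\delta_{k+1}$-close to $f([-b_k,b_k])$. Enlarge $D_k$ by the same thin tube to get $D_{k+1}$, and take $a_{k+1}\gtrsim b_k$ at a transverse crossing of $F_{k+1}(b\Omega_{k+1})$ with $\Lambda$.
\end{itemize}
All smallness parameters at step $k$ (the number $t$, the tube widths, $\|g_k\|$, and $b_k-a_k$) are chosen so small that, by openness of immersions and of transversality and by Lemma \ref{lem:embed} applied to $F_k$ near $\bar\Omega_{k+1}$, $F_{k+1}$ is still an injective immersion transverse to $b\B^2$ with $\Area(F_{k+1}(\Omega_{k+1}))<\epsilon/2$, and that the changes made on any already-settled compact set at this and all later steps are summably small. (Injectivity near the new part is automatic: after (i) the rest of the closed disc avoids the thin tube around $E_k^\pm$ except near $p_k^\pm$, where the two transverse crossings of $\Lambda$ keep the old and new boundary apart.) Choosing the $b_k$ so that the intervals $[a_k,b_k]$, $[-b_k,-a_k]$ chain up to exhaust $\R$ with tiny overlaps completes the induction.

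\emph{Limit and main obstacle.} By the summable estimates $h_k\to h=u+\imath v\in\Ascr^1(D)$ on $\bar D$, with $u>0$ on $\R$ and (since $\bar\Omega_k$ stays uniformly inside $D$) $e^{2u}<|f|^2$ on $bD$, so \eqref{eq:h} holds and $F=e^{-h}f$ is defined. The domain $\Omega$ from \eqref{eq:Omega} equals $\bigcup_k\Omega_k$, a rising union of special domains whose boundaries stabilize locally by Lemma \ref{lem:special}(b); hence $\Omega$ is smoothly bounded, simply connected, contains $\R$, satisfies $\bar\Omega\subset D$, and is conformally the disc with $\pm1$ going to its two ends. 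The map $F$ is an immersion since $\sigma\circ F=\sigma\circ f$ is (cf.\ \eqref{eq:immersion}), is transverse to $b\B^2$, and is injective because any coincidence $F(z)=F(w)$, $z\ne w$, would already lie in some $\bar\Omega_k$, where $F$ is injective by construction together with Lemma \ref{lem:embed} and the summable estimates. Monotone convergence gives $\Area(F(\Omega))=\lim_k\Area(F_k(\Omega_k))\le\epsilon/2<\epsilon$, which is ($\alpha$), and $F(b\Omega)$ is everywhere within $o(1)$ of $\Lambda=f(\R)$ by the choice of the arcs $E_k^\pm$, hence dense in $b\B^2$, which is ($\beta$). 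The hard part throughout is the injectivity of the limit: because $\Lambda$ is dense, the thin tubes around the arcs $E_k^\pm$ used at different stages meet, so the stretched-out parts from different stages come arbitrarily close in $\B^2$ and real double points threaten; this is exactly why the cleaning step (i) — with its special perturbations $e^{-h}$, $h\in\Hscr^+_*$, from Lemma \ref{lem:position} — must precede every stretching, and why the whole construction is driven by carefully nested smallness parameters, injectivity not being an open condition in any fine topology on the noncompact $\Omega$. A secondary, purely technical point is to keep the three moves compatible with one another, with the $\Gamma$-symmetry and with the estimates, so that the $\Omega_k$ stay special (i.e.\ $F_k$ stays transverse to $b\B^2$ and $b\Omega_k$ stays real-analytic, so Lemma \ref{lem:special} applies), and so that \eqref{eq:h} is inherited in the limit.
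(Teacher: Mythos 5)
Your proposal follows essentially the same scheme as the paper: an induction that alternates a ``cleaning'' step via Lemma~\ref{lem:position}, an ``exposing/stretching'' step via Lemma~\ref{lem:special}, and a pull-in of the newly exposed strip, with injectivity controlled at every stage through Lemma~\ref{lem:embed} and summable-error bookkeeping, and with the density of $F(b\Omega)$ inherited from $f(\R)$ exactly as in the paper. The one place where your argument genuinely diverges from the paper, and where as written there is a gap, is the area estimate.

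You claim to maintain $\Area(F_k(\Omega_k))<\epsilon/2$ along the induction and then conclude by ``monotone convergence'' that $\Area(F(\Omega))=\lim_k\Area(F_k(\Omega_k))$. Monotone convergence gives only $\Area(F(\Omega))=\lim_N\int_{\Omega_N}|F'|^2$ (a fixed map $F$ on an increasing exhaustion); it does not identify this with $\lim_N\int_{\Omega_N}|F_N'|^2$, which involves a different map at each stage. To bridge the two you would need to show that $\int_{\Omega_N}\bigl(|F'|^2-|F_N'|^2\bigr)\to 0$, i.e.\ that the cumulative effect, on the \emph{growing} sets $\Omega_N$, of all perturbations performed at stages $>N$ tends to zero; this is not automatic from locally summable $\Cscr^1$-estimates and requires a scheme in which the smallness requested at step $j$ is adapted to all earlier stages simultaneously. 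The paper avoids this difficulty by never tracking areas along the induction: it keeps the multipliers in the fixed form $h_n=\tilde h_n\circ\Psi_n$ with $\tilde h_n\in\Hscr^+_*$ small in $\Cscr^1(\overline\D)$, so that in the limit $h=\tilde h\circ\Psi$ satisfies $|h'|^2\le c^2|\Psi'|^2$ with $c=\max_{\overline\D}|\tilde h'|$ controllable, whence $\int_\Omega|h'|^2\,dx\,dy\le c^2\,\Area(\Psi(\Omega))\le\pi c^2$; together with $\int_\Omega|f'|^2$ small from Lemma~\ref{lem:initial-embedding}, this gives $\Area(F(\Omega))<\epsilon$ in one global estimate. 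Your separate ``pulling in'' multipliers $e^{-g_k}$ forgo this structure, so you would either need the explicit $\tilde h\circ\Psi$ factorization (or something playing the same role) or a much more careful double-indexed error budget to make the area claim rigorous; the rest of your argument is sound and mirrors the paper.
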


\begin{proof}
We begin by explaining the scheme of proof. 

We shall construct an increasing sequence of  
special domains $D_1\subset D_2\subset D_3\subset \cdots \subset S$ (see Def.\ \ref{def:special}) 
whose union $D=\bigcup_{j=1}^\infty D_j$ is a simply connected, smoothly bounded, $\Gamma$-invariant domain 
satisfying \eqref{eq:Sprime}. The first domain $D_1$ is a round disc centered at $0$; 
by rescaling the coordinate on $\C$ we may assume that $D_1=\d$ is the unit disc. 
For every $n\in \N$ we let $D_{n+1}=D_n\cup S_n$ be a special domain with the base $(-n-1,n+1)\subset \R$,
furnished by Lemma \ref{lem:special}.
(Recall that $S_n$ is a thin strip around the interval $(-n-1,n+1)$.) 
For each $n\ge 2$ let $\psi_n$ be the biholomorphism
\[ 
	\psi_{n}  \colon D_{n}\to D_{n-1},\quad \psi_{n}(0)=0, \ \  \psi'_n(0)>0.
\] 
By Lemma \ref{lem:conformalspecial}, $\psi_{n}$ extends to a smooth $\Gamma$-equivariant diffeomorphism 
$\psi_n\colon \overline D_{n}\to \overline D_{n-1}$ satisfying $\psi_{n}([-n,n])=[-n+1,n-1]$ and 
$\psi_{n}(\pm n)=\pm (n-1)$.  Set 
\begin{equation}\label{eq:Psi-n}
	\Psi_1=\Id|_{\overline D_1},\qquad 
	\Psi_{n}= \psi_2 \circ \cdots \psi_{n}\colon \overline D_n\to \overline D_1 \quad \forall n=2,3,\ldots.
\end{equation}
At the same time, we shall find a sequence of multipliers $h_n\in \Ascr^\infty(D_n)$ $(n\in\N)$ 
of the form $h_n=\tilde h_n\circ\Psi_n$, with $\tilde h_n\in \Hscr^+_*$ (see \eqref{eq:Hplus}), 
such that the map 
\[
	F_n=e^{-h_n}f\colon \overline D_n\hra\C^2_*
\] 
is an embedding of class $\Ascr^\infty(D_n)$ that is transverse to $b\B^2$, 
and $F_{n+1}$ approximates $F_n$ as closely as desired in $\Cscr^0(\overline D_n)$ 
and in $\Cscr^1(\overline D_n\setminus U_n)$, where $U_n=U^+_n\cup U^-_n$ 
is a small neighborhood of the points $\pm n$ for every $n\in\N$.
In the induction step, we shall use Lemma \ref{lem:position} in order to find  a small perturbation of 
$F_n$ such that $F_n(\overline D_n)$ intersects the pair of arcs
$E^+_n=f([n,n+1])\subset b\B^2$ and $E^-_n = f([-n-1,-n])\subset b\B^2$ 
only at the points $f(\pm n)$. This will allow us to construct
the next map $F_{n+1}=e^{-h_{n+1}}f \colon \overline D_{n+1}\hra \C^{2}_*$
which is an embedding mapping the strip $\overline {D_{n+1}\setminus D_n}$
into a small neighborhood of the arcs $E^+_n\cup E^-_n$. 
The sequence $h_n$ will be chosen such that it converges
to a function  $h=u+\imath v  = \tilde h\circ \Psi  \in \Ascr^1(D)$ satisfying \eqref{eq:h}, 
with $\tilde h=\lim_{n\to\infty} \tilde h_n\in \Ascr^1(D_1)$. Furthermore, 
we will ensure that the limit map $F=\lim_{n\to\infty} F_n=e^{-h}f\colon \overline D\to \C^2_*$
(which satisfies \eqref{eq:whatFdoes}  in view of \eqref{eq:h}) 
is an injective immersion that is transverse to the sphere $b\B^2$. 
The domain $\Omega$ \eqref{eq:Omega} will then satisfy the conclusion of the lemma,
and $F(\Omega)\subset\B^2$ will be a properly embedded holomorphic disc satisfying
Theorem \ref{th:main}.

We now turn to the details. Recall that $|f|^2>1$ on $S\setminus \R$.
We begin by choosing a function $h_1=u_1+\imath v_1\in \Hscr^+_*$ on $\overline D_1=\cd$,
close to $0$ in $\Cscr^1(\overline D_1)$, such that 
\begin{equation}\label{eq:u1}
	e^{2u_1} <\frac{1}{2}\left( |f|^2+1\right)\ \ \text{on}\ bD_1\setminus\{\pm 1\}
\end{equation}
and the map $F_1= e^{-h_1}f\colon \overline D_1 \to\C^2_*$
of class $\Ascr^\infty(D_1)$ is an embedding (see Lemma \ref{lem:embed}) 
which is transverse to $b\B^2$ (see Lemma \ref{lem:position}).
From \eqref{eq:u1} we infer that $|F_1|=e^{-u_1}|f|>1$ on $bD_1\setminus \{\pm 1\}$.
Recall  that $|F_1|<1$ on $(-1,1)$ since $u_1>0$ on $D_1$ and $|f|=1$ on $\R$. Let 
\[
	C_1= \{z\in \overline D_1: F_1(z)\in b\B^2\}, \quad \Gamma_1= F_1(C_1)= F_1(\overline D_1)\cap b\B^2.
\]
We have that $C_1 \subset D_1\cup\{\pm 1\}$, each of the sets $C_1$ and $\Gamma_1$ is a union of finitely 
many smooth closed Jordan curves, and $\Gamma_1$ bounds the embedded 
complex curve $F_1(D_1)\cap \B^2$ (see Remark  \ref{rem:Ch}). By \cite{Forstneric1988PM}
the curve $\Gamma_1$ is transverse to the distribution  
$\xi \subset T(b\B^2)$ of complex tangent planes, and hence  
the $\xi$-Legendrian embedding $f\colon \R\hra b\B^2$ is not tangent to $\Gamma_1$ at the points 
$f(\pm 1)\in \Gamma_1$. Thus, there is a number $0<\delta<1$ such that 
\[
	f\bigl([1-\delta,1+\delta] \cup [-1-\delta,-1+\delta]\bigr)  \cap \Gamma_1 = \{f(1),f(-1)\}.
\]
Applying Lemma \ref{lem:position} with the smooth compact curve 
\[
	E=f([-2,-1-\delta]\cup [1+\delta,2]) \subset b\B^2 \setminus  \Gamma_1 
\]
we can approximate $h_1\in \Hscr^+_*$ as closely as desired in the $\Cscr^1(\overline D_1)$ norm 
by a function $\tilde h_1\in \Hscr^+_*$ such that, after redefining the map $F_1$ by setting $F_1=e^{-\tilde h_1}f$
and also redefining the curves $C_1$ and $\Gamma_1$ accordingly, the above conditions still hold
and in addition we have
\begin{equation}\label{eq:avoid}
	f([1,2] \cup [-2,-1]) \cap \Gamma_1 = f([1,2] \cup [-2,-1]) \cap F_1(\overline D_1) =  \{f(1),f(-1)\}.
\end{equation}
Write $\tilde h_1=\tilde u_1+\imath\tilde v_1$. This completes the initial step.

We now explain how to obtain the next embedding $F_2\colon \overline D_2\hra \C^2_*$.
This is the first step of the induction, and all subsequent steps will be of the same kind.

Choose a compact set $M \subset S$ containing $\overline D_1\cup [-2,+2]$ in the interior.
Pick a small number $\mu=\mu_1>0$. By (the proof of) Lemma \ref{lem:embed} we may decrease  $\mu>0$
if necessary such that for any domain $D'\subset M$ and function $h\in \Ascr(D')$ 
satisfying $|h|<\mu$ on $\overline {D'}$ the map $e^{-h}f\colon \overline {D'}\to\C^2_*$
is injective. Since $\tilde u_1$ vanishes on $bD_1=\T$ near $\pm1$ by the definition of the class $\Hscr$,  
there are small discs $U^\pm_1$ around the points $\pm 1$ such that
\begin{equation}\label{eq:tildeu1}
	\text{$\tilde u_1$ vanishes on $bD_1\cap U^\pm_1$}.
\end{equation}
Furthermore, since $\tilde h_1(\pm 1)=0$, we can shrink the discs $U^\pm_1$ if necessary to get
\begin{equation}\label{eq:tildeh1}
	|\tilde h_1|<\mu  \ \ \text{on}\ \ \overline D_1\cap (\overline U^+_1\cup \overline U^-_1)).
\end{equation}
Choose a pair of smaller open discs $W^\pm_1 \Subset V^\pm_1 \Subset U^\pm_1$ around the points $\pm1$.
Lemma \ref{lem:special} furnishes a special domain $D_2$ with the base $(-2,+2)$
satisfying $D_1\subset D_2\subset M$ and a $\Gamma$-equivariant conformal diffeomorphism
$\psi_2=\Psi_2\colon \overline D_{2}\to \overline D_1$ with $\psi_{2}(0)=0$ and $\psi'_{2}(0)>0$.
(By the construction, $D_2$ is the union of $D_1$ and an arbitrarily  thin $\Gamma$-invariant strip  $S_2$
around the interval $(-2,2)\subset\R$, with $\pm 2\in bD_2$.) We may choose $D_2$ such 
that the attaching set $\overline {D_2\cap bD_1}$ is contained in $W^+_1\cup W^-_1$. 
Consider the pair of compact sets
\begin{equation}\label{eq:KL}
	K =\overline D_1 \setminus (W^+_1\cup W^-_1), \quad
	L= \overline{D_2\setminus D_1} \cup (\overline D_1\cap (\overline V^+_1\cup \overline V^-_1)).
\end{equation}
Note that 
\[
	K\cup L=\overline D_2,\quad K\cap L = 
	\overline D_2 \cap \bigl( (\overline V^+_1\setminus W^+_1) \cup (\overline V^-_1\setminus W^-_1) \bigr).
\]
By Lemma \ref{lem:special}, the domain $D_2$ can be chosen such that $\Psi_2$ is as close 
as desired to the identity map in $\Cscr^1(K)$ and 
\begin{equation}\label{eq:Psi2}
	\Psi_2(L) \subset U^+_1\cup U^-_1.
\end{equation}
Set 
\begin{equation}\label{eq:h2}
	h_2=u_2+\imath v_2  := \tilde h_1 \circ \Psi_2 \in \Ascr^\infty(D_2).
\end{equation}
Note that $u_2>0$ on $D_2$, $u_2$ vanishes on $bD_2\setminus \overline D_1$ by \eqref{eq:tildeu1} 
and \eqref{eq:Psi2}, $h_2(\pm 2)=0$, and 
\begin{equation}\label{eq:h2abs}
	|h_2|<\mu\quad \text{on}\ \ \overline{D_2\setminus D_1}
\end{equation}
which follows from  \eqref{eq:tildeh1}, \eqref{eq:KL}, and \eqref{eq:Psi2}.
Assuming that the approximations are close enough, we see from \eqref{eq:u1} that
\begin{equation}\label{eq:eu2}
	e^{2u_2} <\frac{1}{2}\left( |f|^2+1\right)\ \ \text{on}\ bD_2\setminus\{\pm 1\}.
\end{equation}
We claim that the immersion
\[
	F_2=e^{-h_2}f \colon \overline D_2\to \C^2_*
\]
of class $\Ascr^\infty(D_2)$ is injective  provided that the approximations are close enough.
Indeed, $F_2$ is injective on $L$ by the choice of the constant $\mu>0$, 
the estimate \eqref{eq:tildeh1}, the inclusion \eqref{eq:Psi2}, and the definition \eqref{eq:h2}
of $h_2$. Assuming as we may that $\Psi_2$ is close enough to the identity on $K$ (see Lemma  \ref{lem:special}),
the function $h_2|_{K}$ is so close to $\tilde h_1|_{K}$ that $F_2$ is injective on $K$
in view of Lemma \ref{lem:embed}. To obtain injectivity of $F_2$ on $\overline D_2$, it remains to see that  
\[
	F_2(\overline {L\setminus K})\cap F_2(\overline {K\setminus L}) = \varnothing.
\]
Note that $F_2$ maps $\overline {L\setminus K}$ into a small neighborhood of the two arcs 
$f([1,2] \cup [-2,-1])$. Since these arcs intersects 
$F_1(\overline D_1)$ only at the points $F_1(\pm 1) = f(\pm 1)\in \Gamma_1$ (cf.\ \eqref{eq:avoid})
and $F_2$ can be chosen as close as desired to $F_1$ on the set $K$ which does not contain
the points $\pm 1$, the claim follows.

By a slight adjustment of $\tilde h_1$ (and hence of $h_2$, see \eqref{eq:h2}), keeping the above conditions, 
we may assume that the embedding $F_2\colon \overline D_2\hra \C^2$ is transverse to $b\B^2$ (see Lemma \ref{lem:position}). 
Each of the sets $C_2= \{z\in \overline D_2: F_2(z)\in b\B^2\}$ and $\Gamma_2= F_2(C_2)= F_2(\overline D_2)\cap b\B^2$
is then a finite unions of smooth Jordan curves. By another application of
Lemma \ref{lem:position} we can find a $\Cscr^1(\overline D_1)$-small deformation 
$\tilde h_2\in \Hscr^+_*$ of $\tilde h_1$ such that, redefining $h_2$ by setting 
$h_2=\tilde h_2\circ\Psi_2\in \Ascr^\infty(D_2)$ and adjusting the map $F_2$ 
accordingly, the above conditions remain valid and in addition we have that
\[
	f([-3,-2] \cup [2,3]) \cap F_2(\overline D_2)=\{f(2),f(-2)\}.
\]
This completes the first step of the induction, and we are now ready to apply the same arguments 
to the map $F_2$ and the domain $D_2$ to find the next embedding $F_3\colon \overline D_3\hra \C^2_*$.

Clearly this construction can be continued inductively. It yields 
\begin{itemize}
\item an increasing sequence of special domains $D_1\subset D_2\subset D_3\subset\cdots$ such that
$D_n$ has the base $(-n,n)$, $D_{n+1}=D_n\cup S_n$ where $S_n$ is a thin strip around the interval $(-n-1,n+1)$,
and the union $\overline {D}=\bigcup_{n=1}^\infty \overline D_n\subset S$ is a smoothly bounded, simply 
connected, $\Gamma$-invariant domain, 
\vspace{1mm}
\item a sequence $U_{n}=U^-_n\cup U^+_n$ of small pairwise disjoint neighborhood of the points $\{-n,n\}$ 
such that $\overline D_{n-1}\cap \overline U_n=\varnothing$ for every $n=2,3,\ldots$, 
\vspace{1mm}
\item a sequence of $\Gamma$-equivariant diffeomorphisms 
$\Psi_n= \psi_2 \circ \cdots \psi_{n}\colon \overline D_n\to \overline D_1$
of class $\Ascr^{\infty}(D_n)$, where $\Psi_1=\Id|_{\overline D_1}$ and 
$\psi_n \colon \overline D_n\to \overline D_{n-1}$ for $n\ge 2$ (see  \eqref{eq:Psi-n}), 
\vspace{1mm}
\item a sequence of multipliers
\begin{equation}\label{eq:hn}
	h_{n}=\tilde h_n\circ\Psi_n \in \Ascr^\infty(D_{n}) \quad \text{with}\ \  \tilde h_n\in \Hscr^+_*,
\end{equation}	
\item a sequence of embeddings 
$
	F_n=e^{-h_n}f\colon \overline D_n\hra \C^2_*
$
of class $\Ascr^\infty(D_n)$,
\end{itemize}
such that the following conditions hold for every $n\in\N$:
\begin{itemize}
\item[\rm (a$_n$)]  the conformal diffeomorphism $\psi_{n+1}\colon \overline D_{n+1}\to\overline D_n$ 
is arbitrarily close to the identity in $\Cscr^1(\overline D_{n}\setminus U_n)$ 
and satisfies $\psi_{n+1}(\overline{D_{n+1}\setminus D_{n}}) \subset U_n$;
\vspace{1mm}
\item[\rm (b$_n$)] the function $h_n=u_n+\imath v_n$ \eqref{eq:hn} satisfies $u_n>0$ on $D_n$ and
$e^{2u_n} < \frac{1}{2}(|f|^2+1)$ on $bD_n\setminus \{\pm n\}$  (see \eqref{eq:eu2}); 
\vspace{1mm}
\item[\rm (c$_n$)] $\tilde h_{n+1}$ approximates $\tilde h_n$  as closely as desired in $\Cscr^1(\overline D_1)$;
\vspace{1mm}
\item[\rm (d$_n$)] $h_{n+1}$ approximates $h_n$  as closely as desired in $\Cscr^0(\overline D_n)$
and in $\Cscr^1(\overline D_n\setminus U_n)$; 
\vspace{1mm}
\item[\rm (e$_n$)] 
$|h_{n+1}|$ is  as small as desired uniformly  on $\overline {D_{n+1}\setminus D_n}$ (see \eqref{eq:h2abs}); 
\vspace{1mm}
\item[\rm (f$_n$)] the map $F_{n+1}$ approximates $F_{n}$ as closely as desired uniformly on $\overline D_n$
and in $\Cscr^1(\overline D_n\setminus U_n)$,  and $F_{n+1}$ is as close as desired to $f$ 
uniformly on $\overline {D_{n+1}\setminus D_n}$. 
\end{itemize}

Note that (f$_n$) is a consequence of (a$_n$), (c$_n$), (d$_n$), and (e$_n$).

Assuming that these approximations are close enough at every step, we can draw the following conclusions.
The sequence $\Psi_n\colon\overline D_n\to \overline D_1$ 
converges to the $\Gamma$-equivariant biholomorphic map 
$\Psi \colon  D=\bigcup_{n=1}^\infty D_n \to \D$ with $\Psi(0)=0$ and $\Psi'(0)>0$.
Note that $\Psi(\R)=(-1,1)$, and $\Psi$ extends to a $\Cscr^\infty$ diffeomorphism $\overline {D} \to \cd\setminus \{\pm1\}$.
Secondly, the sequence $h_n\in\Ascr^\infty(D_n)$ converges in the weak  $\Cscr^1(\overline {D})$ topology
(i.e., in the $\Cscr^1$ topology on every compact subset of $\overline D$) to a function 
\begin{equation}\label{eq:tildeh}
	h=u+\imath v= \tilde h \circ\Psi  \in \Ascr^1(D) \ \ \ \text{where}\ \ \ \tilde h=\lim_{n\to \infty} \tilde h_n \in \Ascr^1(D_1)
\end{equation}
(the second limit $\tilde h$ exists in the $\Cscr^1(\overline D_1)$ topology in view of condition (c$_n$)) 
satisfying 
\begin{equation}\label{eq:limitestimate}
		u>0\ \ \text{on}\ \ D, \quad   e^{2u} \le \frac{1}{2}\left( |f|^2+1\right) < |f|^2\ \ \text{on}\ bD. 
\end{equation}
Thirdly, the sequence of embeddings $F_n= e^{-h_n}f \colon \overline D_n\hra \C^2_*$ converges 
in the weak  $\Cscr^1(\overline {D})$ topology to the map $F=e^{-h}f \colon \overline {D}\to\C^2$ of class $\Ascr^1(D)$.
Since $\sigma\circ F = \sigma\circ f\colon \overline D\to \CP^1$ is an immersion, $F$ is an immersion on $\overline D$.
Lemma \ref{lem:embed} shows that $F$ is injective (hence an embedding) on every domain $\overline D_n$,
and therefore also on $\overline D$, provided that the approximation of $F_n$ by $F_{n+1}$  
is close enough in $\Cscr^0(\overline D_n)$ for every $n\in\N$ (see (f$_n$)).
Note that conditions \eqref{eq:h} holds in view of \eqref{eq:limitestimate}, and hence 
$F$ satisfies condition \eqref{eq:whatFdoes}. It follows that the simply connected domain $\Omega$ 
\eqref{eq:Omega} (with $\R\subset \Omega\subset\overline\Omega\subset D$)
is well defined,  and the restricted map $F|_\Omega \colon \Omega\hra\B^2$ is a properly embedded holomorphic disc. 
Since every map $F_n\colon \overline D_n\to \C^2$ is transverse to $b\B^2$,
the same is true for $F\colon \overline D\to \C^2$ provided the approximation is close enough at every step.

It remains to show conditions ($\alpha$) and ($\beta$) in the lemma.

Given a sequence  $\epsilon_1>\epsilon_2>\cdots>0$ with $\lim_{n\to\infty} \epsilon_n=0$,
conditions (d$_n$) and (e$_n$) show that the sequence $h_{n}$ \eqref{eq:hn} can be chosen
such that its limit $h$ \eqref{eq:tildeh} satisfies
\[
	|h|<\epsilon_n \quad \text{on}\ \ \overline{D_{n+1}\setminus D_n}\ \ \ \text{for all}\ \ n\in \N.
\]
This implies that $F(b\Omega)\subset b\B^2$ consists of a pair of curves in the sphere
which are as close as desired to the curve $f(\R)\subset b\B^2$ in the fine $\Cscr^0$ topology.
Since $f(\R)$ is dense in $b\B^2$, the same is true for $F(b\Omega)$ provided the
approximation is close enough, so condition ($\beta$) holds. 
(This argument is similar to the one in \cite[proof of Theorem VI.1]{GlobevnikStout1986}.)

It remains to estimate the area of the disc $F(\Omega)\subset \B^2$. Set 
\begin{equation}\label{eq:c}
	c = \max \left\{ |\tilde h'(z)| : z\in\overline D_1\right\}
\end{equation}
where $\tilde h \in \Ascr^1(D_1)$ is as in \eqref{eq:tildeh}.
Note that $c>0$ can be made as small as desired by choosing all terms of the sequence 
$\tilde h_n\in \Hscr^+_*$ small enough in the $\Cscr^1(\overline D_1)$ norm. 
Recall that $h=\tilde h \circ\Psi$ (see \eqref{eq:tildeh}). We have $h'(z) = \tilde h'(\Psi(z)) \Psi'(z)$ and hence
$|h'(z)|^2 \le c^2 |\Psi'(z)|^2$ for $z\in D$. Differentiation of $F=e^{-h}f$ gives $F'=-e^{-h}h'f+e^{-h}f'$.
Note that $u>0$ and hence $e^{-u}<1$ on $D$. Recall also that $|f|^2\le 2$ on $S$ (see \eqref{eq:norm}). 
Using the inequality $|a+b|^2\le 2(|a|^2+|b|^2)$, which holds for any $a,b\in\C^n$, we thus obtain
\begin{eqnarray*}
	\Area(F(\Omega)) &=& \int_{\Omega} |F'|^2 dxdy  \le  
	2\int_{\Omega} e^{-2u}|h'|^2 |f|^2 dxdy + 2 \int_{\Omega} e^{-2u}  |f'|^2 dxdy\\
		&\le & 
		4c^2 \int_{\Omega}  |\Psi'(z)|^2 dxdy +  2 \int_{\Omega} |f'|^2 dxdy \\
		& = & 4c^2 \Area(\Psi(\Omega)) + 2\Area(f(\Omega)).
\end{eqnarray*}
Since $\Psi(\Omega)\subset \Psi(D)=\D$, we have $\Area(\Psi(\Omega)) \le \pi$,
and hence the first term is bounded by $\epsilon/2$ if $c>0$ is small enough.
Lemma \ref{lem:initial-embedding} shows that the second term can be chosen $<\epsilon/2$.
This completes the proof of Lemma \ref{lem:main},  and hence of Theorem \ref{th:main}.
\end{proof}


\subsection*{Acknowledgements}
This research was supported  in part by the research program P1-0291 and grant J1-7256 from 
ARRS, Republic of Slovenia. I wish to thank Filippo Bracci for having brought  to my attention the question 
answered in the paper, Bo Berndtsson for the communication regarding the reference \cite{Berndtsson1980},
Josip Globevnik for helpful discussions and the reference to his work  \cite{GlobevnikStout1986}
with E.\ L.\ Stout, Finnur L{\'a}russon for remarks concerning the exposition, and Erlend F.\ Wold
for the communication on Sect.\ \ref{sec:conformal}.



\def\cprime{$'$}


\vspace*{0.5cm}
\noindent Franc Forstneri\v c

\noindent Faculty of Mathematics and Physics, University of Ljubljana, Jadranska 19, SI--1000 Ljubljana, Slovenia.

\noindent Institute of Mathematics, Physics and Mechanics, Jadranska 19, SI--1000 Ljubljana, Slovenia.

\noindent e-mail: {\tt franc.forstneric@fmf.uni-lj.si}

\end{document}